\newcommand{\bN}{\mathbb{N}}
\newcommand{\bR}{\mathbb{R}}
\newcommand{\bT}{\mathbb{T}}
\newcommand{\cA}{\mathcal{A}}
\newcommand{\cF}{\mathcal{F}}
\newcommand{\cJ}{\mathcal{J}}
\newcommand{\cM}{\mathcal{M}}
\newcommand{\cN}{\mathcal{N}}
\newcommand{\cY}{\mathcal{Y}}
\newcommand{\rd}{\mathrm{d}}
\newcommand{\Aut}{\mathrm{Aut}}
\newcommand{\cha}{\mathbbm{1}}
\newcommand{\norm}[1]{\left\|{#1}\right\|}
\theoremstyle{plain}%
\newtheorem{theorem}{Theorem}%
\newtheorem{proposition}{Proposition}%
\newtheorem{lemma}{Lemma}%
\newtheorem{corollary}{Corollary}%
\newtheorem{conjecture}{Conjecture}%
\theoremstyle{definition}%
\newtheorem{definition}{Definition}%
\theoremstyle{remark}%
\newtheorem{remark}{Remark}%
\title{Mixing for generic passive scalars by incompressible flows}
\author{Zeyu Jin \footnote{School of Mathematical Sciences, Peking University, Beijing 100871, China (\texttt{jinzy@pku.edu.cn}).} 
\and Ruo Li
\footnote{CAPT, LMAM and School of Mathematical Sciences, Peking University, Beijing 100871, China; Chongqing Research Institute of Big Data, Peking University, Chongqing 401121, China
(\texttt{rli@math.pku.edu.cn}).}}
\date{}
\begin{document}

\maketitle

\begin{abstract}
Mixing by incompressible flows is a ubiquitous yet incompletely 
understood phenomenon in fluid dynamics. 
While previous studies have focused on optimal mixing rates, the question of 
its genericity, i.e., whether mixing occurs for typical incompressible flows 
and typical initial data, remains mathematically unclear.
In this paper, it is shown that classical mixing criteria, 
e.g. topological mixing or non-precompactness in $L^2$ for all nontrivial 
densities, 
fail to persist under arbitrarily small perturbations of velocity fields.
A Young-measure theory adapted to $L^\infty$ data is then developed to 
characterize exactly which passive scalars mix.
As a consequence, the existence of a single mixed density is equivalent to 
mixing for generic bounded data, and this equivalence is further tied to the 
non-precompactness of the associated measure-preserving flow maps in $L^p$.
These results provide a foundation for a general theory of generic mixing in 
non-autonomous incompressible flows.
\newline

\noindent \textbf{Keywords}: 
Mixing; incompressible flows; genericity; passive scalars; Young measures
\newline

\noindent \textbf{MSC Classification}:
76F25; 35Q35; 28D05; 37A25
\end{abstract}

\section{Introduction}
Mixing by incompressible flows is a fundamental phenomenon in fluid 
dynamics, arising across diverse scales and applications ranging 
from industrial processes, such as
chemical reactions \cite{BALDYGA1997457,Koochesfahani_Dimotakis_1986} and  
food processing \cite{cullen2009food,nienow1997mixing}, 
to geophysical systems, including 
atmospheric transport \cite{haynes2005transport} and 
ocean circulation \cite{garrett1979mixing}.
In these settings, the stirring and blending of the fluid play a critical 
role in energy transfer and the generation of turbulence 
\cite{cotizelatiMixingIncompressibleFlows2024}.
Despite its ubiquity, a complete mathematical characterization of mixing 
remains elusive. 
Although extensive work has addressed mixing in various contexts, such as 
statistical properties \cite{boffettaNonasymptoticPropertiesTransport2000}, 
inviscid damping \cite{Bedrossian2016enhanced}, 
chaotic dynamics 
\cite{ottinoKinematicsMixingStretching2004, Hassan1984stirring}, and 
optimal control theory 
\cite{liuMixingEnhancementOptimal2008, mathewOptimalControlMixing2007}, 
a unified understanding of when and how mixing occurs under general 
incompressible flows is still lacking.

Mathematically, the evolution of a passive scalar density $\rho(t,x)$ 
transported by a divergence-free velocity field $u(t,x)$ provides a 
fundamental model for the mixing phenomenon.
It is governed by the transport equation:
\[
\partial_t \rho + u \cdot \nabla \rho = 0, \quad 
\rho(0, \cdot) = \rho_0.
\]
Here $\rho$ is passive, meaning that its evolution does not influence the 
velocity field $u$, and diffusion is neglected, isolating the role of 
advection in mixing.
Since the transport equation conserves all the $L^p$ norms of $\rho$, 
one employs weaker notions to quantify the mixing rates.
Two common approaches are 
\emph{functional mixing scales} \cite{Lin2011optimal,mathew2005multiscale}, 
which quantify the decay of negative-index Sobolev norms of $\rho$, and 
\emph{geometric mixing scales} \cite{bressanLemmaConjectureCost2003}, 
which measure how rapidly the scalar becomes spatially homogenized on finer 
and finer scales.
Qualitatively, mixing corresponds to the weak convergence in $L^2$ of 
$\rho(t, \cdot)$ to its spatial average as $t \to \infty$ in either framework 
\cite{mathew2005multiscale}.

A central theme in the literature is the determination of optimal mixing 
rates under various regularity and integrability constraints on $u$,
motivated by applications in industrial processes and control theory, 
where mixing efficiency is critical 
\cite{albertiExponentialSelfsimilarMixing2018a}.
When $u$ is uniformly bounded in $W^{s,p}$ with $s \in [0,1)$ and 
$p \in [1,\infty]$, it is known that perfect mixing, i.e., convergence to its 
spatial average, can occur in finite time 
\cite{Lin2011optimal,Lunasin2012Optimal}, which is a consequence of the 
non-uniqueness phenomena for the transport equation in this regime
\cite{alberti2014uniqueness,mondena2018nonuniqueness}.
At the critical regularity $s = 1$, if $u$ is uniformly bounded in $W^{1,p}$
with $p \in (1, \infty]$, exponential decay of mixing 
has been obtained in several senses, including 
geometric mixing scales \cite{Crippa2008Estimates},
decay of the Monge--Kantorovich--Rubinstein distance and the $H^{-1}$ norm 
for binary-phase initial data \cite{seisMaximalMixingIncompressible2013},
and $H^{-1}$-decay for general initial data \cite{IyerKiselevXu2014}
The borderline case $s = 1, p = 1$ corresponds to Bressan’s 
longstanding mixing conjecture \cite{bressanLemmaConjectureCost2003}, 
which remains unsolved.
On the constructive side, explicit velocity fields achieving these optimal 
mixing rates have been constructed in 
\cite{albertiExponentialSelfsimilarMixing2018a, 
elgindiUniversalMixersAll2019, yaoMixingUnmixingIncompressible2017}.
Moreover, these mixing results have further implications for 
loss of regularity of solutions to the transport equation 
\cite{Alberti2019Loss}, 
enhanced dissipation in the presence of diffusion 
\cite{constantinDiffusionMixingFluid2008, coti2020relation, 
Feng2019dissipation}, and anomalous dissipation in turbulent regimes 
\cite{Elgindi2024norm, drivas2022anomalous}.

While much is known about achieving the fastest possible mixing under 
prescribed constraints, a fundamental question of equal importance, namely, 
\emph{to what extent mixing is a generic property of incompressible flows},
has received relatively little attention.
From a physical standpoint, one expects mixing to be prevalent among 
incompressible flows due to its ubiquity, yet the mathematical theory of 
generic mixing is largely undeveloped. 
To date, genericity results have been obtained only in very specialized 
settings: 
for autonomous flows, it is shown that topological mixing is generic in the 
$C^1$ topology \cite{bessaGenericIncompressibleFlow2008}; 
and for autonomous shear flows, quantitative estimates of mixing rates hold 
for a generic set of initial data in a measure-theoretic sense 
\cite{galeatiMixingGenericRough2023}. 
However, these results exclude the non-autonomous flows that 
are ubiquitous in practical applications. 
In particular, no theory yet explains whether the solution of the transport 
equation is mixed by a typical time-dependent divergence-free velocity field.
A theory of the genericity of mixing for general non-autonomous flows 
still remains unaddressed.

In this work, we initiate a systematic study of generic mixing for 
non-autonomous incompressible flows. 
Theorem~\ref{thm:nong} shows that all classical mixing 
criteria, including topological mixing, geometric mixing, or functional mixing, 
fail to persist under arbitrarily small perturbations of the velocity field in 
$L^\infty ([0, +\infty); W^{1,p})$ with $p \in (1, \infty)$. 
This non-robustness indicates that one must seek an even weaker, yet still 
meaningful, notion of generic mixing. 
To achieve this, finer tools are required to capture long-time behavior of the 
transported density $\rho(t, \cdot)$. 
An appropriate tool is Young measures \cite{Young1942Generalized}.
In particular, we develop in Theorem~\ref{thm:youngL} a version of the 
fundamental theorem of Young measures adapted to $L^\infty$ data. 
Within this framework, Theorem~\ref{thm:cha} provides a complete 
characterization of those initial densities that mix under a given 
incompressible flow by identifying the $\sigma$-algebra generated by unmixed 
level sets.
As a consequence, the existence of a single mixed density is equivalent to 
mixing for generic bounded data. 
Finally, in Theorem~\ref{thm:topo} we show that this mixing criterion for 
generic data is equivalent to the non-precompactness of the family of 
measure-preserving flow maps in the $L^p$ topology with $p \in [1, \infty)$.

We emphasize that the distinction between mixing for \emph{all} nontrivial 
initial data and mixing for a \emph{generic} set of initial data is essential. 
Indeed, this perspective of generic mixing aligns with fundamental questions in 
fluid mechanics such as \v{S}ver\'ak's conjecture 
\cite{sverak2011course,drivas2023singularity} 
on the long-time behavior of two-dimensional incompressible Euler flows, which 
posits that the vorticity orbits are not precompact in $L^2$ for \emph{generic}
bounded initial vorticities. 
To our knowledge, the investigations presented herein constitute the first 
rigorous framework for understanding the genericity of mixing in non-autonomous 
incompressible flows.

The rest of this paper is organized as follows. 
In \Cref{sec:nong}, we review several classical definitions of mixing and 
prove their non-robustness under small 
$L^\infty ([0,+\infty); W^{1,p})$ perturbations. 
In \Cref{sec:young}, we prove a version of the fundamental theorem of Young 
measures for measure-preserving flow maps and derive a Young-measure 
characterization of mixing.
\Cref{sec:structure} employs this characterization to identify the 
$\sigma$-algebra structure of unmixed sets and characterizes the 
structure of mixed initial data.
\Cref{sec:topo} discusses the topology of measure-preserving bijections and 
establishes that non-precompactness of the flow maps is equivalent to mixing 
for a generic set of initial data in $L^\infty$.
Finally, in \Cref{sec:concl}, we make some conclusive remarks and propose a 
conjecture on the genericity of incompressible flows.

\section{Non-robustness of mixing}\label{sec:nong}
We would like to identify a notion of mixing that can be generic in the space 
of incompressible flows. 
To achieve this, we have to investigate weaker formulations of mixing,
since broader definitions admit larger classes of flows and thus have 
greater prospects for genericity.
Throughout, we interpret genericity in the topological sense rather than in 
the measure-theoretic sense.

The objective of this section is to demonstrate that the conventional notions 
of mixing cannot be robust.
In \Cref{sec:nong_notions}, we recall the classical concepts of mixing, 
such as strong mixing, topological mixing, and mixing of transport solutions 
for essentially arbitrary initial data.
In \Cref{sec:nong_thm}, we prove that none of these notions of mixing 
persists under small perturbations of the velocity field in the 
$L^\infty([0,+\infty);W^{1,p})$ topology. 
This non-robustness compels us to seek an even weaker mixing criterion, which 
will be discussed in \Cref{sec:structure}.

\subsection{Setups}\label{sec:nong_setup}
Let $d\ge2$ and consider the $d$-dimensional torus $\bT^d$ equipped with the 
Lebesgue measure~$\mu$.
Consider the following transport equation: 
\begin{equation}\label{eq:transport}
\partial_t \rho + u \cdot \nabla \rho = 0, \quad 
\rho(0, \cdot) = \rho_0,
\end{equation}
where $u : [0, +\infty) \times \bT^d \to \bR^d$ is a divergence-free velocity 
field, i.e., $\nabla \cdot u = 0$ in the sense of distributions, and 
$\rho : [0, +\infty) \times \bT^d \to \bR$ is the transported scalar density.
We work on $\bT^d$ to avoid technical complications introduced by 
boundary issues and non-compactness of the domain, 
though all results can be extended to the full space 
$\bR^d$ or general compact Riemannian manifold.

Fix $p \in (1, \infty)$ and assume that 
\[
u \in L^\infty([0, +\infty); W^{1,p}(\bT^d; \bR^d)), \quad 
\rho_0 \in L^\infty(\bT^d).
\]
By the DiPerna--Lions theory 
\cite{DiPerna1989Ordinary,Ambrosio2004Transport}, 
there is a unique weak solution to \cref{eq:transport}, 
\[\rho \in L^\infty([0,+\infty); L^\infty(\bT^d)) \cap 
C([0,+\infty); L^p(\bT^d)),\]
which is represented via the measure-preserving flow map 
$\Phi_t : \bT^d \to \bT^d$ by 
\[
\rho(t,x) = \rho_0(\Phi_t^{-1}(x)),
\]
where, for almost every $x \in \bT^d$, $\Phi_t(x)$ is the unique 
absolutely continuous integral solution of the ODE 
$\dot{\gamma}(t) = u(t, \gamma(t))$ with $\gamma(0) = x$ 
(cf. \cite{ambrosioContinuityEquationsODE2014,
DiPerna1989Ordinary}).

Denote by $\cM$ the $\sigma$-algebra of all the Lebesgue-measurable sets on 
$\bT^d$, by $\cJ\subset\cM$ the $\sigma$-ideal of null sets, and write
\[
L^\infty (W^{1,p}) = L^\infty([0, +\infty); W^{1,p}(\bT^d; \bR^d))
\]
when there is no ambiguity.

\subsection{Notions of mixing}\label{sec:nong_notions}
We begin by recalling the quantitative estimates for mixing in the $H^{-1}$ 
norm in \cite{IyerKiselevXu2014}.
For any divergence-free velocity field $u \in L^\infty (W^{1,p})$ and any 
$\rho_0 \in L^\infty$, one has 
\[
\norm{\rho(t) - \bar{\rho}}_{H^{-1}} \ge c_1 \exp \left( 
- c_2 \int_0^t \norm{\nabla u(s)}_{L^p} \,\rd s \right),
\]
where $\bar{\rho} := \fint_{\bT^d} \rho_0(x) \,\rd x$, and the constants
$c_1, c_2 > 0$ depend explicitly only on $p$, $d$, and $\rho_0$.
Thus, exponential decay of the $H^{-1}$-norm is optimal, and in fact examples 
achieving this bound have been constructed explicitly in 
\cite{albertiExponentialSelfsimilarMixing2018a}.

In general, one cannot expect exponential decay of 
$\norm{\rho(t) - \bar{\rho}}_{H^{-1}}$ for each divergence-free velocity 
field $u$. 
A natural question is whether 
\[
\norm{\rho(t) - \bar{\rho}}_{H^{-1}} \to 0 \quad \text{ as }
t \to \infty
\]
holds for a \emph{generic} incompressible flow $u \in L^\infty (W^{1,p})$.
Here and below we restrict attention to functional mixing scales, since 
by \cite{mathew2005multiscale}, the decay of functional and geometric 
mixing scales are equivalent.

Since $\norm{\rho(t)}_{L^q} = \norm{\rho_0}_{L^q}$ for all 
$q \in [1, \infty]$, it follows that 
\[
\sup_{t \ge 0} \norm{\rho(t) - \bar{\rho}}_{L^2} < +\infty.
\]
and hence by the Banach--Alaoglu theorem, 
\begin{equation}\label{eq:weakconst}
\norm{\rho(t) - \bar\rho}_{H^{-1}} \to 0
\quad \Longrightarrow \quad
\rho(t) \rightharpoonup \bar\rho \quad \text{ weakly in } L^2.
\end{equation}
The latter condition is equivalent to \emph{strong mixing} in dynamical 
systems:
\[
\mu(\Phi_t(A) \cap B) \to \frac{\mu(A) \mu(B)}{\mu(\bT^d)} \quad 
\text{as } t \to \infty,
\]
for every pair of measurable sets $A, B \in \cM$.

One may weaken the notion of strong mixing further by considering 
\emph{topological mixing}: 
for any two nonempty open sets $U, V \subset \bT^d$, there exists 
$\tau > 0$ such that 
\[
\Phi_t(U) \cap V \not = \varnothing \quad \text{for all } t \ge \tau.
\]
It is known that, in dimension $d\ge3$, topological mixing is a generic 
property in the sense of Baire category among time-independent $C^1$ 
incompressible flows \cite{bessaGenericIncompressibleFlow2008}.

From a purely functional-analytic perspective, one may further relax the 
mixing criterion in \cref{eq:weakconst} to the condition that the trajectory 
${\rho(t, \cdot)}_{t\ge0}$ is \emph{not precompact} in $L^2$. More precisely:

\begin{definition}[Mixing]\label{def:mixing}
Fix a divergence-free velocity field $u \in L^\infty (W^{1,p})$. 
An initial datum $\rho_0 \in L^\infty(\bT^d)$ is said to be 
\emph{mixed} by $u$ if the set $\{\rho(t, \cdot)\}_{t \ge 0}$ is 
not precompact in $L^2$.
Denote by 
\[
\cF_u := \left\{ \rho_0 \in L^\infty(\bT^d) \mid \{\rho(t, \cdot)\}_{t \ge 0} 
\text{ is not precompact in } L^2 \right\}
\]
the set of all such initial data.
\end{definition}

We say that $u$ is \emph{universally non-precompact} if 
\[
\cF_u = L^\infty(\bT^d) \setminus \{ \rho_0 \in L^\infty(\bT^d) \mid 
\rho_0(x) = \bar{\rho} \text{ a.e. } x \in \bT^d \}.
\]
Universal non-precompactness is strictly weaker than 
either $H^{-1}$-norm decay or strong mixing, 
since $\norm{\rho(t) - \bar{\rho}}_{L^2}$ remains bounded below away from 
zero, unless $\rho_0 = \bar\rho$ almost everywhere.

\begin{remark}
Physically, these weaker notions capture the fact that pure advection may 
fail to be so effective to achieve uniform spatial homogenization. 
This phenomenon is also observed in Euler flows 
\cite{drivas2023singularity,Shnirelman1993lattice,sverak2011course}, where 
the vorticity solutions can be trapped in time-dependent regimes that 
perpetually avoid further mixing. 
In such cases, they develop intricate structures rather than homogenization.
\end{remark}

\subsection{Failure of robustness}\label{sec:nong_thm}
In the previous discussions, we examined several natural weakenings of mixing.
We now demonstrate that none of these properties is preserved under 
arbitrarily small perturbations in $L^\infty (W^{1,p})$. 
To be precise, we have the following result.

\begin{theorem}\label{thm:nong}
Let $u \in L^\infty (W^{1,p})$ be a divergence-free velocity field, and assume 
that the family $\{|\nabla u(t, \cdot)|^p\}_{t \ge 0}$ is uniformly 
integrable over $\bT^d$. 
Then for every $\epsilon > 0$, there exists a divergence-free velocity field 
$v \in L^\infty (W^{1,p})$ with 
\begin{equation}\label{eq:nong_pert_bound}
\sup_{t \ge 0} \norm{v(t, \cdot)}_{W^{1, p}} < \epsilon,
\end{equation}
such that $u + v$ is neither topologically mixing nor universally 
non-precompact.
\end{theorem}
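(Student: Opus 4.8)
The plan is to destroy both mixing properties at once by inserting an arbitrarily small perturbation that creates a \emph{stagnation pocket}: a ball whose interior is transported as a rigid body by the perturbed flow. Fix a point $x_0\in\bT^d$ and a small radius $r>0$ (to be chosen last). First I would mollify $u$ at scale $r$, setting $c(t,\cdot):=u(t,\cdot)*\kappa_r$ with $\kappa_r$ a smooth radial bump supported in $B_{r/2}(0)$ of unit mass; then $c(t,\cdot)$ is smooth in $x$, divergence-free, and $\norm{\nabla_x c(t)}_{L^\infty}\le C(r)\norm{\nabla u(t)}_{L^p}$ uniformly in $t$, so the ODE $\dot y=c(t,y)$, $y(0)=x_0$, has a unique Carath\'eodory solution $y:[0,+\infty)\to\bT^d$. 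With $\zeta$ a fixed cutoff, $\zeta\equiv1$ on $B_{1/2}(0)$, $\mathrm{supp}\,\zeta\subset B_1(0)$, writing $\zeta_y(x):=\zeta((x-y)/r)$, I would define
\[
v(t,x):=\zeta_{y(t)}(x)\big(c(t,y(t))-u(t,x)\big)-\mathcal B_{y(t)}\!\Big(\nabla\zeta_{y(t)}\cdot\big(c(t,y(t))-u(t,\cdot)\big)\Big)(x),
\]
where $\mathcal B_y$ is a Bogovski\u\i\ operator on the annulus $B_r(y)\setminus B_{r/2}(y)$ (for $d=2$ one may instead take $v=\nabla^\perp$ of a cut-off stream-function correction, avoiding $\mathcal B_y$). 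Since $\nabla\cdot u=0$ and $c(t,y(t))$ is a constant vector, the Bogovski\u\i\ source has zero mean on the annulus, so $v$ is well defined, divergence-free, supported in $B_r(y(t))$, and $u+v\equiv c(t,y(t))$ on $B_{r/2}(y(t))$.

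The decisive step is the estimate $\sup_{t\ge0}\norm{v(t)}_{W^{1,p}}<\epsilon$, and the key observation is that one must cancel the \emph{combination} $c(t,y(t))-u$, not $u$ alone. Because $c(t,y(t))$ is a weighted average of $u(t)$ over the pocket, Poincar\'e--Wirtinger gives $\norm{c(t,y(t))-u(t)}_{L^p(B_r(y(t)))}\le Cr\norm{\nabla u(t)}_{L^p(B_r(y(t)))}$, so the dangerous factor $|\nabla\zeta_{y(t)}|\sim r^{-1}$ is absorbed; together with the scale-invariance of the Bogovski\u\i\ constant on a fixed-shape annulus this yields $\norm{v(t)}_{W^{1,p}}\le C_*\norm{\nabla u(t)}_{L^p(B_r(y(t)))}$ with $C_*$ independent of $r,t$. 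Hence $\sup_{t\ge0}\norm{v(t)}_{W^{1,p}}\le C_*\sup_{t\ge0}\sup_{z\in\bT^d}\norm{\nabla u(t)}_{L^p(B_r(z))}$, and by uniform integrability of $\{|\nabla u(t)|^p\}_{t\ge0}$ the right-hand side tends to $0$ as $r\to0$; a small $r$ then gives $v\in L^\infty(W^{1,p})$ and \cref{eq:nong_pert_bound}. This is the only use of the hypothesis, and it is precisely why the integrability is imposed on $\nabla u$ rather than on $u$.

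With $w:=u+v$, both failures are immediate. Inside $B_{r/2}(y(t))$ one has $w(t,\cdot)\equiv c(t,y(t))=\dot y(t)$, so for every $x\in B_{r/2}(x_0)$ the curve $s\mapsto x+(y(s)-x_0)$ is an absolutely continuous integral curve of $w$ remaining in the pocket; by uniqueness in the DiPerna--Lions theory the flow $\Phi_t$ of $w$ acts on $B_{r/2}(x_0)$ as the rigid translation by $y(t)-x_0$, so $\Phi_t(B_{r/2}(x_0))=B_{r/2}(y(t))$. Thus the nontrivial datum $\rho_0:=\cha_{B_{r/2}(x_0)}$ is carried to $\cha_{B_{r/2}(y(t))}$, whose orbit lies in the compact set $\{\cha_{B_{r/2}(z)}:z\in\bT^d\}\subset L^2$; hence $\rho_0\notin\cF_w$ and $w$ is not universally non-precompact. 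For topological mixing, set $a:=\limsup_{t\to\infty}\mathrm{dist}(y(t),x_0)$: if $a>r$ there are $t_n\to\infty$ with $\Phi_{t_n}(B_{r/2}(x_0))\cap B_{r/2}(x_0)=\varnothing$; if $a\le r$ then for all large $t$ one has $\Phi_t(B_{r/2}(x_0))\subset B_{3r}(x_0)\subsetneq\bT^d$, which misses a fixed small ball in its complement. Either way the defining property of topological mixing fails, and the theorem follows.

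I expect the single genuine obstacle to be the $W^{1,p}$ bound of the second paragraph: a direct attempt to cancel $u$ on a ball of radius $r$ costs $\sim\norm{u}_{L^p(B_r)}/r$, which uniform integrability of $\nabla u$ does not control, whereas cancelling $u$ against its local average reduces the cost to $\sim\norm{\nabla u}_{L^p(B_r)}$ and makes the scheme work. The remaining points to verify carefully are the scale-invariance of the Bogovski\u\i\ operator on the shrinking annulus and the (routine but necessary) fact that, $w$ being smooth on the pocket, its DiPerna--Lions flow there coincides with the explicit rigid translation, so that the two conclusions hold as genuine set statements rather than only up to null sets.
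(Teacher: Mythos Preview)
Your proposal is correct and follows essentially the same approach as the paper: a localized cutoff-plus-Bogovski\u{\i} perturbation around a moving center creates a rigid ``stagnation pocket,'' and the $W^{1,p}$ cost is controlled by $\norm{\nabla u(t)}_{L^p}$ on a small ball via a Poincar\'e-type inequality, then made small by uniform integrability. The only minor technical difference is that the paper first reduces to smooth $u$ by density and centers the pocket on the actual trajectory $\Phi_t(\bar x)$ with interior constant $u(t,\Phi_t(\bar x))$, whereas you mollify and use the local average $c(t,y(t))$; both variants work for the same reason.
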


\begin{proof}
Since the set of all the smooth divergence-free velocity fields is dense in 
\[
\{ u \in W^{1,p} \mid \nabla \cdot u = 0 \},
\]
we may assume without loss of generality that $u(t, x)$ is smooth in~$x$.
Fix $\delta \in (0, \frac14)$ and choose a point $\bar{x} \in \bT^d$
such that the trajectory $t \mapsto \Phi_t(\bar{x})$ is absolutely 
continuous on $[0, +\infty)$.
Let $\phi_\delta \in C^\infty(\bR^d)$ be a radial cutoff function 
satisfying that 
\[
\phi_\delta(y) = 
\begin{cases}
1, & \quad |y| \le \delta, \\ 0, & \quad |y| \ge 2 \delta,
\end{cases}
\]
and 
\begin{equation}\label{eq:cutoff_deriv}
\norm{\nabla \phi_\delta}_{L^\infty} \lesssim \delta^{-1}.
\end{equation}
For each fixed $t \ge 0$, set $y = x - \Phi_t(\bar{x})$ and consider the 
annular domain 
\[
\Omega_\delta := B_{2\delta}(0) \setminus \overline{B_\delta(0)}.
\]
Define the scalar function 
\[
f_t(y) := \big( u(t, y + \Phi_t(\bar{x})) - u(t, \Phi_t(\bar{x})) \big) \cdot 
\nabla \phi_\delta(y),
\]
which has zero integral over~$\Omega_\delta$. 
Indeed, one checks by integration by parts that
\begin{multline}\label{eq:bogovskii_compat}
\int_{\Omega_\delta} f_t(y) \,\rd y = 
\int_{\Omega_\delta}
\big( u(t, y + \Phi_t(\bar{x})) - u(t, \Phi_t(\bar{x})) 
\big) \cdot \nabla \phi_\delta(y) \,\rd y \\ 
= -\int_{\Omega_\delta} \phi_\delta(y)
\nabla \cdot u(t, y + \Phi_t(\bar{x})) \,\rd y + 
\int_{\partial \Omega_\delta} \phi_\delta(y) 
\big( u(t, y + \Phi_t(\bar{x})) - u(t, \Phi_t(\bar{x})) \big) \cdot n 
\,\rd \sigma(y) \\
= - \int_{\partial B_\delta} u(t, y + \Phi_t(\bar{x})) \cdot n 
\,\rd \sigma(y) = 
- \int_{B_\delta} \nabla \cdot u(t, y + \Phi_t(\bar{x})) \,\rd y = 0,
\end{multline}
since the velocity field $u$ is divergence-free.
By the standard Bogovski\u{\i} theory 
(cf. \cite[Theorem~1 and Lemma~3]{Bogovskii1980solutions} and 
\cite[Theorem~III.3.1]{Galdi2011introduction}), 
there exists a unique vector field 
$w_t \in W^{1,p}_0(\Omega_\delta; \bR^d)$ solving the following 
Bogovski\u{\i} equation:
\begin{equation}\label{eq:Bogovskii}
\nabla \cdot w_t(y) = f_t(y)
\quad \text{in } \Omega_\delta, \quad 
w_t|_{\partial \Omega_\delta} = 0, 
\end{equation}
and satisfying that 
\begin{equation}\label{eq:bogovskii_estimate}
\norm{w_t}_{W^{1,p}(\Omega_\delta; \bR^d)} \lesssim 
\norm{f_t}_{L^p(\Omega_\delta)}.
\end{equation}
Here, \cref{eq:Bogovskii} satisfies the compatibility condition due to 
\cref{eq:bogovskii_compat}.
Moreover, by Poincare inequality, one obtains that 
\[
\norm{u(t, \cdot + \Phi_t(\bar{x})) - u(t, \Phi_t(\bar{x}))}
_{L^p(B_{2 \delta}(0); \bR^d)} \lesssim \delta 
\norm{\nabla u(t, \cdot)}_{L^p(B_{2\delta}(\Phi_t(\bar{x})); \bR^d)}.
\]
By \cref{eq:cutoff_deriv}, one has the following estimate
\begin{equation}\label{eq:poincare}
\norm{f_t}_{L^p(\Omega_\delta)} \lesssim
\norm{\nabla u(t, \cdot)}_{L^p(B_{2\delta}(\Phi_t(\bar{x})); \bR^d)},
\end{equation}
which yields that 
\begin{equation}\label{eq:nong_w}
\norm{w_t}_{W^{1,p}} \lesssim 
\norm{\nabla u(t, \cdot)}_{L^p(B_{2\delta}(\Phi_t(\bar{x})); \bR^d)},
\end{equation}
thanks to \cref{eq:bogovskii_estimate}.
Define, for each fixed $t \ge 0$, the localized perturbation as follows,
\[
v(t, x) := \phi_\delta(x - \Phi_t(\bar{x})) \cdot 
\big( u(t, \Phi_t(\bar{x})) - u(t, x) \big) + w_t(x - \Phi_t(\bar{x})),
\]
extended by zero outside $B_{2\delta}(\Phi_t(\bar x))$.
By construction, $v(t,\cdot)$ is divergence-free and compactly supported in 
$B_{2\delta}(\Phi_t(\bar x))$.
Combining \cref{eq:poincare,eq:nong_w}, we can obtain that 
\[
\norm{v(t, \cdot)}_{W^{1,p}(\bT^d; \bR^d)} \lesssim 
\norm{\nabla u(t, \cdot)}_{L^p(B_{2\delta}(\Phi_t(\bar{x})); \bR^d)}.
\]
Since $\{|\nabla u(t, \cdot)|^p\}_{t \ge 0}$ is uniformly integrable over 
$\bT^d$, by choosing $\delta > 0$ sufficiently small, we can ensure 
\cref{eq:nong_pert_bound}.
Moreover, on the spherical shell $\partial B_\delta(\Phi_t(\bar x))$, 
one has that 
\[
u(t, x) + v(t, x) = u(t, \Phi_t(\bar{x})) \quad \text{for each }
x \in \partial B_\delta(\Phi_t(\bar{x})).
\]
In addition, $v(t, \Phi_t(\bar{x})) \equiv 0$ for all $t \ge 0$.
Hence the perturbed flow map $\tilde{\Phi}_t$ generated by $u + v$ satisfies 
that 
\begin{equation}\label{eq:phi_b}
\tilde{\Phi}_t(B_\delta(\bar{x})) = B_\delta (\tilde{\Phi}_t(\bar{x})) \quad 
\text{for all } t \ge 0.
\end{equation}
For an arbitrary sequence $\{t_j\}_{j \in \bN}$ such that 
$t_j$ tends to infinity, one can choose a subsequence 
$\{t_{j_k}\}_{k \in \bN}$ such that
\[
\tilde{\Phi}_{t_{j_k}}(\bar{x}) \to \tilde{x} \in \bT^d \quad 
\text{ as } k \to \infty,
\]
since $\tilde\Phi_t(\bar x)$ remains in a compact set $\bT^d$.
Choose any open set $V\subset\bT^d$ for which
$V \cap B_{2\delta}(\tilde x)=\varnothing$. 
It follows that 
\[
\tilde{\Phi}_{t_{j_k}}(B_{\delta}(\bar{x})) \cap V = \varnothing,
\]
for sufficiently large $k$, due to \cref{eq:phi_b}.
Therefore, $u + v$ fails to be topologically mixing.
Setting $\rho_0 = \cha_{B_\delta(\bar{x})}$, one sees that 
\[
\rho(t_{j_k}, \cdot) = \cha_{B_\delta(\Phi_{t_{j_k}}(\bar{x}))} \to 
\cha_{B_\delta(\tilde{x})} \quad \text{strongly in } L^2,
\]
showing that $u + v$ is not universally non-precompact.
This completes the proof.
\end{proof}

\begin{remark}
The uniform integrability assumption on the family
$\{ |\nabla u(t, \cdot)|^p \}_{t \ge 0}$ is mild.
In particular, it holds automatically for any time-independent velocity 
field in $W^{1,p}$.
Furthermore, for any $u \in L^\infty (W^{1, p+\epsilon})$ for some 
arbitrarily small $\epsilon > 0$, H\"older’s inequality yields that 
\[
\norm{\nabla u(t, \cdot)}_{L^p(D)} \lesssim 
\norm{u}_{L^\infty (W^{1,p+\epsilon})} \cdot 
\mu(D)^{\frac{\epsilon}{p (p+\epsilon)}} \quad \text{for each measurable set }
D \in \cM,
\]
and hence $\norm{\nabla u}_{L^p(D)}$ can be made arbitrarily small as long as 
$\mu(D)$ is sufficiently small.
In particular, all known constructions achieving optimal exponential mixing 
rates (e.g. 
\cite{albertiExponentialSelfsimilarMixing2018a,elgindiUniversalMixersAll2019})
satisfy this uniform integrability condition.
Consequently, even flows that exhibit the fastest possible mixing rates can 
be perturbed arbitrarily slightly in $L^\infty (W^{1,p})$ to violate every 
standard mixing criterion, let alone preserve the optimal mixing rates.
This result may be relevant to applications requiring rapid mixing, 
where idealized optimal flows are unattainable and small perturbations 
are inevitable.
\end{remark}

\section{Young measures}\label{sec:young}
In this section, we develop a Young–measure framework to describe the 
asymptotic behavior of the transported density
$\rho(t, \cdot) = \rho_0 \circ \Phi_t^{-1}$.
This characterization will be essential in \Cref{sec:structure} to determine 
the structure of mixed initial data.

We review in \Cref{sec:young_ball} the classical fundamental theorem of 
Young measures due to Ball \cite{Ball1989Young}, and then refine this result 
to the case of measure-preserving maps and arbitrary $L^\infty$ data in 
\Cref{sec:young_refine}.
Using these results, we characterize exactly when the trajectory 
$\{\rho \circ \Phi_t^{-1}\}_{t \ge 0}$ is $L^2$-precompact in 
\Cref{sec:young_cha}.

\subsection{Fundamental theorem of Young measures}\label{sec:young_ball}
We begin by recalling the classical theorem of Ball \cite{Ball1989Young} in a 
form adapted to our setting.

\begin{theorem}[Fundamental theorem of Young measures 
\cite{Ball1989Young}]\label{thm:ball}
Let $\{\Psi_j\}_{j \in \bN}$ be a sequence of Lebesgue measurable 
maps $\Psi_j : \bT^d \to \bT^d$. 
Then there exists a subsequence $\{\Psi_{j_k}\}_{k \in \bN}$ 
and a family of probability measures $\{\nu^x\}_{x \in \bT^d}$ on~$\bT^d$, 
such that $\nu^x(\bT^d) = 1$ for almost every $x \in \bT^d$, 
and 
\[
\rho \circ \Psi_{j_k} \rightharpoonup \langle \nu^x, \rho \rangle \quad 
\text{weakly in } L^2(\bT^d),
\]
for any continuous function $\rho \in C(\bT^d)$.
\end{theorem}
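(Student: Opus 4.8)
The plan is to follow the classical route to Ball's theorem: first extract, via a diagonal argument, a subsequence along which $\rho\circ\Psi_j$ converges weakly in $L^2(\bT^d)$ for every $\rho$ in a fixed countable dense subset of $C(\bT^d)$; then build the Young measures fiberwise by the Riesz--Markov representation theorem; and finally pass from the countable dense set to all of $C(\bT^d)$ by uniform approximation.

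First I would fix a countable set $\mathcal D=\{\rho_1,\rho_2,\dots\}\subset C(\bT^d)$ that is dense in $C(\bT^d)$, contains the constant function $1$, and is closed under $\mathbb Q$-linear combinations. Because $\bT^d$ is compact we have $\norm{\rho_m\circ\Psi_j}_{L^\infty(\bT^d)}\le\norm{\rho_m}_{L^\infty(\bT^d)}$ for every $j$, so each sequence $\{\rho_m\circ\Psi_j\}_{j}$ is bounded in $L^2(\bT^d)$. By the Banach--Alaoglu theorem and the separability of $L^2(\bT^d)$, together with a diagonal extraction over $m\in\bN$, I obtain a single subsequence $\{\Psi_{j_k}\}_{k}$ and functions $T_m\in L^\infty(\bT^d)$ such that $\rho_m\circ\Psi_{j_k}\rightharpoonup T_m$ weakly in $L^2(\bT^d)$ for every $m$. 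Since weak $L^2$-limits preserve pointwise-a.e.\ inequalities against constants, there is a single set $E\subset\bT^d$ of full measure on which, simultaneously for all $m$, the $\mathbb Q$-linear relations among the $\rho_m$ are inherited by the $T_m$, one has $\lvert T_m\rvert\le\norm{\rho_m}_{L^\infty}$, and $T_m\equiv 1$ whenever $\rho_m\equiv 1$.

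For $x\in E$ the assignment $\rho_m\mapsto T_m(x)$ is then a $\mathbb Q$-linear functional on $\mathcal D$, bounded by the sup-norm and fixing the constant $1$; it therefore extends uniquely to a norm-one linear functional on $C(\bT^d)$ fixing $1$, which is automatically positive, and hence by the Riesz--Markov theorem equals $\rho\mapsto\langle\nu^x,\rho\rangle:=\int_{\bT^d}\rho\,\rd\nu^x$ for a unique Borel probability measure $\nu^x$ on $\bT^d$; for $x\notin E$ I set $\nu^x:=\delta_{x_0}$ with $x_0$ fixed. It remains to remove the restriction to $\mathcal D$ and to check measurability. Given $\rho\in C(\bT^d)$, choose $\rho_{m_\ell}\in\mathcal D$ with $\rho_{m_\ell}\to\rho$ uniformly; then $\norm{(\rho-\rho_{m_\ell})\circ\Psi_{j_k}}_{L^\infty}\le\norm{\rho-\rho_{m_\ell}}_{L^\infty}$ uniformly in $k$, so an $\varepsilon/3$ argument shows that $\rho\circ\Psi_{j_k}$ converges weakly in $L^2(\bT^d)$ and that its limit is the $L^\infty$-limit of $T_{m_\ell}$, which by construction is $x\mapsto\langle\nu^x,\rho\rangle$. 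In particular this last function is measurable, being an $L^\infty$-limit of measurable functions, which is exactly the weak-$*$ measurability of $x\mapsto\nu^x$. This yields $\rho\circ\Psi_{j_k}\rightharpoonup\langle\nu^x,\rho\rangle$ weakly in $L^2(\bT^d)$ for every $\rho\in C(\bT^d)$, as claimed.

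The one genuinely delicate point is the fiberwise construction of $\{\nu^x\}$: one must arrange, using only a \emph{single} countable family of null sets, that the weak limits $T_m$ simultaneously inherit $\mathbb Q$-linearity, the sup-norm bound, and normalization from $\mathcal D$ for a.e.\ $x$, so that Riesz--Markov applies on a.e.\ fiber and the resulting map $x\mapsto\nu^x$ is measurable; the diagonal extraction, the sup-norm bounds, and the density argument are all routine. An alternative one-step proof identifies the maps $x\mapsto\delta_{\Psi_j(x)}$ with elements of the unit ball of the dual of the separable space $L^1(\bT^d;C(\bT^d))$ and invokes sequential weak-$*$ compactness, but this merely relocates the same measurable-selection content into the duality $L^1(\bT^d;C(\bT^d))^*\cong L^\infty_{w*}(\bT^d;C(\bT^d)^*)$.
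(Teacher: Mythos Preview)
The paper does not supply its own proof of this theorem: Theorem~\ref{thm:ball} is stated as a citation of Ball's classical result \cite{Ball1989Young} and is used as a black box in the proof of Theorem~\ref{thm:youngL}. There is therefore no ``paper's proof'' to compare against.

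That said, your argument is a correct and standard route to Ball's theorem. The diagonal extraction over a countable $\mathbb Q$-linear dense set, the passage to a single full-measure set $E$ on which the countably many a.e.\ constraints hold simultaneously, and the fiberwise Riesz--Markov representation are exactly the ingredients one needs; your observation that a norm-one functional on $C(\bT^d)$ sending $1\mapsto 1$ is automatically positive is the clean way to avoid tracking positivity separately. The alternative you sketch via the duality $L^1(\bT^d;C(\bT^d))^*\cong L^\infty_{w*}(\bT^d;\mathcal M(\bT^d))$ is in fact closer to Ball's original presentation, but as you note the measurable-selection content is the same either way.
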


\begin{remark}
Theorem~\ref{thm:ball} requires the continuity of $\rho$ and fails for 
arbitrary measurable functions, as the composition of a merely measurable 
function $\rho$ with a measurable map $\Psi_j$ need not be measurable. 
In our problem, however, $\rho_0\in L^\infty(\mathbb{T}^d)$, and thus, 
\Cref{thm:ball} does not apply directly.
Crucially, the maps $\Psi_j$ are measure-preserving bijections, which 
allows us to extend the conclusion to all bounded measurable $\rho$.
\end{remark}

\subsection{Extension to $L^\infty$}\label{sec:young_refine}
\begin{theorem}\label{thm:youngL}
Let $\{\Psi_j\}_{t \in \bN}$ be a sequence of measure-preserving bijections
$\Psi_j : \bT^d \to \bT^d$. 
Let the subsequence $\{\Psi_{j_k}\}_{k \in \bN}$ and the family of probability 
measures $\{\nu^x\}_{x \in \bT^d}$ be as in 
Theorem~\ref{thm:ball}.
Then 
\[
\rho \circ \Psi_{j_k} \rightharpoonup \langle \nu^x, \rho \rangle \quad 
\text{weakly in } L^2(\bT^d),
\]
for every $\rho \in L^\infty(\bT^d)$.
\end{theorem}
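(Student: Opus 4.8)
The goal is to upgrade the weak-$L^2$ convergence $\rho\circ\Psi_{j_k}\rightharpoonup\langle\nu^x,\rho\rangle$ from continuous $\rho$ to arbitrary $\rho\in L^\infty(\bT^d)$, exploiting the fact that the $\Psi_j$ are measure-preserving bijections. The plan is to proceed by density: approximate $\rho\in L^\infty$ by continuous functions $\rho_n$ in $L^1(\bT^d)$, keeping $\norm{\rho_n}_{L^\infty}$ controlled, and pass to the limit. I would first fix a test function $\psi\in L^2(\bT^d)$ (or, since the sequence $\{\rho\circ\Psi_{j_k}\}$ is bounded in $L^\infty\subset L^2$, it suffices to test against $\psi\in L^\infty$, or even against indicators of measurable sets by a further density step) and aim to show $\int_{\bT^d}(\rho\circ\Psi_{j_k})\psi\,\rd\mu\to\int_{\bT^d}\langle\nu^x,\rho\rangle\psi\,\rd\mu$.

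The key estimates: write the difference as a telescoping sum
\[
\int (\rho\circ\Psi_{j_k})\psi\,\rd\mu - \int\langle\nu^x,\rho\rangle\psi\,\rd\mu
= \int (\rho-\rho_n)\circ\Psi_{j_k}\cdot\psi\,\rd\mu
+ \int \big((\rho_n\circ\Psi_{j_k}) - \langle\nu^x,\rho_n\rangle\big)\psi\,\rd\mu
+ \int\langle\nu^x,\rho_n-\rho\rangle\psi\,\rd\mu.
\]
For the middle term, \Cref{thm:ball} applies directly to the continuous function $\rho_n$, so it tends to $0$ as $k\to\infty$ for each fixed $n$. The first term is where measure-preservation is essential: since $\Psi_{j_k}$ preserves $\mu$, we have $\norm{(\rho-\rho_n)\circ\Psi_{j_k}}_{L^1} = \norm{\rho-\rho_n}_{L^1}$, so this term is bounded by $\norm{\psi}_{L^\infty}\norm{\rho-\rho_n}_{L^1}$ uniformly in $k$ (or, if we test against general $\psi\in L^2$, bound it via $\norm{\rho-\rho_n}_{L^2}$ and measure-preservation in $L^2$); in either case it is small for large $n$ by density of $C(\bT^d)$ in $L^1(\bT^d)$ together with a truncation to keep the sup-norm under control. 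For the third term, $|\langle\nu^x,\rho_n-\rho\rangle|\le\langle\nu^x,|\rho_n-\rho|\rangle$, and integrating in $x$ against $\mu$ I would use that $\nu^x$ is the Young measure limit: heuristically $\int_{\bT^d}\langle\nu^x,|\rho_n-\rho|\rangle\,\rd\mu = \lim_k\int_{\bT^d}|\rho_n-\rho|\circ\Psi_{j_k}\,\rd\mu = \norm{\rho_n-\rho}_{L^1}$ again by measure-preservation — but $|\rho_n-\rho|$ is only measurable, so I would instead bound $|\rho_n-\rho|$ above by a continuous function (or use that $\mu\otimes\nu$ has $\mu$ as its first marginal, i.e. $\int_{\bT^d}\langle\nu^x,g\rangle\,\rd\mu=\int_{\bT^d}g\,\rd\mu$ for $g\in C(\bT^d)$, extended to $g\in L^1$ by monotone class) to conclude $\int_{\bT^d}\langle\nu^x,|\rho_n-\rho|\rangle\,\rd\mu\to0$.

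Combining: given $\eta>0$, choose $n$ large so that the first and third terms are each $<\eta/3$ uniformly in $k$, then choose $k$ large so the middle term is $<\eta/3$. This gives convergence tested against $\psi\in L^\infty$ (hence, by boundedness of $\{\rho\circ\Psi_{j_k}\}$ in $L^2$ and density of $L^\infty$ in $L^2$, against all $\psi\in L^2$), which is exactly weak convergence in $L^2(\bT^d)$. I expect the main obstacle to be the third term — making rigorous the identity $\int_{\bT^d}\langle\nu^x,g\rangle\,\rd\mu = \int_{\bT^d}g\,\rd\mu$ for bounded measurable $g$ (equivalently, that the first marginal of the Young measure of a measure-preserving sequence is $\mu$ itself): one gets it for $g\in C(\bT^d)$ from \Cref{thm:ball} plus $\int(g\circ\Psi_{j_k})\,\rd\mu=\int g\,\rd\mu$, and then must extend to $g\in L^\infty$ by a monotone class / dominated convergence argument on the family $\{\nu^x\}$. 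A secondary technical point is ensuring that the truncation used to approximate $\rho\in L^\infty$ by $\rho_n\in C(\bT^d)$ can be done with $\norm{\rho_n}_{L^\infty}\le\norm{\rho}_{L^\infty}$ and $\norm{\rho_n-\rho}_{L^1}\to0$ simultaneously, which is standard (mollify then truncate, or use Lusin's theorem).
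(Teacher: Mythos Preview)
Your proposal is correct and follows essentially the same route as the paper: the identical three-term telescoping decomposition, measure-preservation to control $\norm{(\rho-\rho_n)\circ\Psi_{j_k}}$, and a marginal-measure estimate for the term $\int\langle\nu^x,\rho_n-\rho\rangle\psi\,\rd\mu$. The only cosmetic differences are that the paper builds the continuous approximant via Lusin's theorem plus Tietze extension (so that $\rho-\tilde\rho$ is supported on a set of measure $<\epsilon$) and, for the third term, proves only the inequality $\int_{\bT^d}\nu^x(D)\,\rd x\le\mu(D)$ rather than the full marginal identity you outline---either version suffices.
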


\begin{proof}
By \Cref{thm:ball}, the conclusion holds for every $\rho \in C(\bT^d)$.
Hence for each such continuous $\rho$ and each $f \in L^2(\bT^d)$, one has 
that 
\[
\int_{\bT^d} (\rho \circ \Psi_{j_k})(x) f(x) \,\rd x \to
\int_{\bT^d} \langle \nu^x, \rho \rangle f(x) \,\rd x \quad \text{as }
k \to \infty.
\]
For any measurable set $D \in \cM$, taking a continuous 
function $\rho$ satisfying that $0 \le \rho \le \cha_D$, and the test function 
$f \equiv 1$, one has that 
\[
\int_{\bT^d} \rho \,\rd x = \int_{\bT^d} \rho \circ \Phi_{j_k} \,\rd x 
\to \int_{\bT^d} \int_{\bT^d} \rho(y) \,\rd \nu^x(y) \,\rd x \ge 
\int_{\bT^d} \nu^x(D) \,\rd x.
\]
Taking the infimum over all such $\rho$, one can obtain that 
\begin{equation}\label{eq:young_set}
\int_{\bT^d} \nu^x(D) \,\rd x \le \mu(D).
\end{equation}
To extend the conclusion of \Cref{thm:ball} to $L^\infty$, fix 
$\rho \in L^\infty(\bT^d)$ and $\epsilon > 0$.
By Lusin's theorem, there exists a 
compact set $K \subset \bT^d$ with 
\begin{equation}\label{eq:young_lusin}
\mu(\bT^d \setminus K) < \epsilon,
\end{equation}
such that $\rho$ restricted to $K$ is continuous.
By the Tietze extension theorem, there exists $\tilde\rho \in C(\bT^d)$ with 
\[
\tilde{\rho} |_K = \rho |_K, \quad 
\norm{\tilde\rho}_{L^\infty} \le \norm{\rho}_{L^\infty}.
\]
For any $f \in L^2$, one has that 
\begin{multline}\label{eq:young_three_terms}
\left| \int_{\bT^d} (\rho \circ \Psi_{j_k})(x) f(x) \,\rd x - 
\int_{\bT^d} \langle \nu^x, \rho \rangle f(x) \,\rd x \right| 
\le \left| \int_{\bT^d} (\tilde\rho \circ \Psi_{j_k})(x) f(x) \,\rd x - 
\int_{\bT^d} \langle \nu^x, \tilde\rho \rangle f(x) \,\rd x \right| \\+ 
\norm{\rho \circ \Phi_{j_k} - \tilde\rho \circ \Phi_{j_k}}_{L^2} 
\norm{f}_{L^2} + 
\norm{\langle \nu^x, \rho \rangle - \langle \nu^x, \tilde \rho 
\rangle}_{L^2} 
\norm{f}_{L^2}.
\end{multline}
Since $\tilde\rho \in C(\bT^d)$, one can obtain by \Cref{thm:ball} that 
\[
\tilde\rho \circ \Psi_{j_k} \rightharpoonup 
\langle \nu^x, \tilde\rho \rangle \quad \text{weakly in } L^2(\bT^d),
\]
which yields an estimate of the first term on the right-hand side of 
\cref{eq:young_three_terms} as follows,
\[
\left| \int_{\bT^d} (\tilde\rho \circ \Psi_{j_k})(x) f(x) \,\rd x - 
\int_{\bT^d} \langle \nu^x, \tilde\rho \rangle f(x) \,\rd x \right| < 
\epsilon,
\]
as long as $k$ is sufficiently large. 
For the second term, since each $\Psi_{j_k}$ preserves 
Lebesgue measure, \cref{eq:young_lusin} yields that 
\[
\norm{\rho \circ \Phi_{j_k} - \tilde\rho \circ \Phi_{j_k}}_{L^2}^2 = 
\norm{\rho - \tilde\rho}_{L^2}^2 = \int_{\bT^d \setminus K} 
|\rho - \tilde\rho|^2 \,\rd x \le 4 \epsilon \norm{\rho}_{L^\infty}^2.
\]
For the third term, one obtains by \cref{eq:young_set,eq:young_lusin} that 
\begin{multline*}
\norm{\langle \nu^x, \rho \rangle - \langle \nu^x, \tilde \rho 
\rangle}_{L^2} =  \left (\int_{\bT^d} \left| \int_{\bT^d} 
(\rho - \tilde\rho) \,\rd \nu^x \right|^2 \,\rd x \right)^{1/2} \le 
2 \norm{\rho}_{L^\infty} \left| \int_{\bT^d} \nu^x(\bT^d \setminus K)^2 
\,\rd x \right|^{1/2} \\
\le 2 \norm{\rho}_{L^\infty} \left| \int_{\bT^d} \nu^x(\bT^d \setminus K) 
\,\rd x \right|^{1/2} \le 2 \norm{\rho}_{L^\infty} 
\mu(\bT^d \setminus K)^{1/2} \le 2 \epsilon^{1/2} \norm{\rho}_{L^\infty}.
\end{multline*}
Combining these estimates, one obtains the following estimate of the left-hand 
term of \cref{eq:young_three_terms} as follows,
\[
\left| \int_{\bT^d} (\rho \circ \Psi_{j_k})(x) f(x) \,\rd x - 
\int_{\bT^d} \langle \nu^x, \rho \rangle f(x) \,\rd x \right| 
\le \epsilon + 4 \epsilon^{1/2} \norm{\rho}_{L^\infty},
\]
which can be made arbitrarily small as long as $k$ is sufficiently large.
This establishes the weak convergence in $L^2$ and completes the proof of this 
theorem.
\end{proof}

\subsection{Young-measure characterization of mixing}\label{sec:young_cha}
Recall from Definition~\ref{def:mixing} that an initial datum 
$\rho\in L^\infty(\mathbb{T}^d)$ is \emph{not mixed} by $u$ if and only if the 
trajectory $\{\rho \circ \Phi_t^{-1}\}_{t \ge 0}$ is precompact in $L^2$.
In order to reformulate this in terms of Young measures, we first define:

\begin{definition}
Let $u \in L^\infty (W^{1,p})$ be a divergence-free velocity field, and let 
$\{\Phi_t\}_{t \ge 0}$ be the associated measure-preserving flow maps.
We say a family of probability measures $\{\nu^x\}_{x \in \bT^d}$ is a 
\emph{Young measure generated by $u$}, if 
there exists a sequence $\{t_j\}_{j \in \bN}$ with $t_j\to\infty$ such that 
the inverse maps $\Phi_{t_j}^{-1}$ generate $\{\nu^x\}_{x \in \bT^d}$ 
in the sense of \Cref{thm:youngL}, i.e., 
\[
\rho \circ \Phi_{t_j}^{-1} \rightharpoonup 
\langle \nu^x, \rho \rangle \quad \text{weakly in } L^2(\bT^d),
\]
for every $\rho \in L^\infty(\bT^d)$.
At this time, we say 
\[
\Phi_{t_j}^{-1} \to \{\nu^x\}_{x \in \bT^d} \quad 
\text{in the Young-measure sense}.
\]
The set of all such Young measures $\{\nu^x\}_{x \in \bT^d}$ generated by 
the velocity field $u$ is denoted by $\cY_u$.
\end{definition}

\begin{lemma}\label{lemma:young}
Let $u \in L^\infty (W^{1,p})$ be a divergence-free velocity field, 
$\{\Phi_t\}_{t \ge 0}$ be its flow maps, and $\rho \in L^\infty(\bT^d)$.
Then the following conditions are equivalent:
\begin{enumerate}
\item $\rho \not \in \cF_u$, i.e., the trajectory 
$\{\rho \circ \Phi_t^{-1}\}_{t \ge 0}$ is precompact in $L^2$;
\item For every Young measure $\{ \nu^x \}_{x \in \bT^d} \in \cY_u$ generated 
by $u$, one has that 
\begin{equation}\label{eq:lemma_young_cond2}
\rho(y) = \int_{\bT^d} \rho(z) \,\rd \nu^x(z) \quad 
\text{for $\mu$-a.e. $x$ and $\nu^x$-a.e. $y$}. 
\end{equation}
\end{enumerate}
\end{lemma}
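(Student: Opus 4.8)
The plan is to prove the two implications separately, using \Cref{thm:youngL} as the bridge between precompactness and the structure of the generated Young measures. The key observation throughout is that for a measure-preserving flow map, $\rho \circ \Phi_{t_j}^{-1} \rightharpoonup \langle \nu^x, \rho \rangle$ together with norm convergence is equivalent to strong $L^2$-convergence, and that $\|\rho \circ \Phi_{t_j}^{-1}\|_{L^2} = \|\rho\|_{L^2}$ is constant in $j$.

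For the direction $(2) \Rightarrow (1)$: suppose every Young measure generated by $u$ satisfies \cref{eq:lemma_young_cond2}. Take an arbitrary sequence $t_j \to \infty$; I want to extract a subsequence along which $\rho \circ \Phi_{t_j}^{-1}$ converges strongly in $L^2$. By \Cref{thm:ball} applied to $\{\Phi_{t_j}^{-1}\}$, pass to a subsequence (not relabeled) so that $\Phi_{t_j}^{-1} \to \{\nu^x\}$ in the Young-measure sense; then $\{\nu^x\} \in \cY_u$, so by hypothesis $\langle \nu^x, \rho\rangle = \int \rho \,\rd\nu^x$ equals the "diagonal" value in the sense of \cref{eq:lemma_young_cond2}. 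The crucial computation is that condition \cref{eq:lemma_young_cond2} forces $\int_{\bT^d} \langle \nu^x, \rho\rangle^2 \,\rd x = \int_{\bT^d} \langle \nu^x, \rho^2 \rangle \,\rd x$: indeed $\langle \nu^x, \rho^2\rangle - \langle \nu^x, \rho\rangle^2 = \int (\rho(z) - \langle\nu^x,\rho\rangle)^2 \,\rd\nu^x(z)$, and \cref{eq:lemma_young_cond2} says the integrand vanishes for $\nu^x$-a.e.\ $z$, for a.e.\ $x$. Applying \Cref{thm:youngL} to $\rho$ and also to $\rho^2$ (both in $L^\infty$), with $f \equiv 1$ playing no role — rather, testing against $\langle \nu^x,\rho\rangle \in L^2$ — gives $\int (\rho\circ\Phi_{t_j}^{-1}) \langle\nu^x,\rho\rangle \,\rd x \to \int \langle\nu^x,\rho\rangle^2\,\rd x$, while $\|\rho\circ\Phi_{t_j}^{-1}\|_{L^2}^2 = \|\rho\|_{L^2}^2 = \int \langle \nu^x, \rho^2\rangle\,\rd x$ (taking $\rho^2$ with $f\equiv 1$ in \Cref{thm:youngL}, using that measure preservation gives $\int \rho^2\circ\Phi_{t_j}^{-1} = \int\rho^2$). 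Hence
\[
\norm{\rho\circ\Phi_{t_j}^{-1} - \langle\nu^x,\rho\rangle}_{L^2}^2
= \norm{\rho\circ\Phi_{t_j}^{-1}}_{L^2}^2 - 2\int (\rho\circ\Phi_{t_j}^{-1})\langle\nu^x,\rho\rangle\,\rd x + \norm{\langle\nu^x,\rho\rangle}_{L^2}^2 \to 0,
\]
so the subsequence converges strongly. Since every sequence has a strongly convergent subsequence, $\{\rho\circ\Phi_t^{-1}\}_{t\ge0}$ is precompact in $L^2$, i.e.\ $\rho\notin\cF_u$.

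For the direction $(1)\Rightarrow(2)$: suppose $\rho\notin\cF_u$, and let $\{\nu^x\}\in\cY_u$ be generated along some $t_j\to\infty$. By precompactness, pass to a subsequence along which $\rho\circ\Phi_{t_j}^{-1}\to g$ strongly in $L^2$ for some $g\in L^\infty$ (the $L^\infty$ bound persists). Strong convergence implies weak convergence, and weak limits are unique, so $g(x) = \langle\nu^x,\rho\rangle$ for a.e.\ $x$. Moreover $\rho^2\circ\Phi_{t_j}^{-1} = (\rho\circ\Phi_{t_j}^{-1})^2 \to g^2$ strongly in $L^1$ (product of $L^2$-convergent sequences), while by \Cref{thm:youngL} applied to $\rho^2$ it converges weakly to $\langle\nu^x,\rho^2\rangle$; hence $\langle\nu^x,\rho^2\rangle = g^2 = \langle\nu^x,\rho\rangle^2$ for a.e.\ $x$. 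Therefore $\int_{\bT^d}\big(\int (\rho(z)-\langle\nu^x,\rho\rangle)^2\,\rd\nu^x(z)\big)\,\rd x = \int (\langle\nu^x,\rho^2\rangle - \langle\nu^x,\rho\rangle^2)\,\rd x = 0$, and since the inner integrand is nonnegative, $\rho(z) = \langle\nu^x,\rho\rangle = \int\rho\,\rd\nu^x$ for $\nu^x$-a.e.\ $z$ and $\mu$-a.e.\ $x$, which is exactly \cref{eq:lemma_young_cond2}.

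The main obstacle I anticipate is bookkeeping rather than depth: one must be careful that the subsequence extracted from $\{t_j\}$ in $(1)\Rightarrow(2)$ still generates the \emph{same} Young measure $\{\nu^x\}$ (it does, since a Young measure generated along a sequence is generated along every subsequence, by uniqueness of weak limits), and one must justify passing $L^\infty$ bounds and the identity $\rho^2\circ\Phi = (\rho\circ\Phi)^2$ through the limits — all elementary but needing the measure-preserving bijection hypothesis. The one genuinely load-bearing identity is the variance formula $\langle\nu^x,\rho^2\rangle - \langle\nu^x,\rho\rangle^2 = \int(\rho - \langle\nu^x,\rho\rangle)^2\,\rd\nu^x$, which reduces "$\rho$ is constant on the support of $\nu^x$" to an $L^2$-norm comparison that \Cref{thm:youngL} directly controls.
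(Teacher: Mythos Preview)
Your proposal is correct and follows essentially the same route as the paper's proof. Both arguments apply \Cref{thm:youngL} to $\rho$ and to $\rho^2$, use the measure-preserving property to fix $\norm{\rho\circ\Phi_{t_j}^{-1}}_{L^2}=\norm{\rho}_{L^2}$, and reduce the equivalence to the Hilbert-space fact that weak convergence plus convergence of norms is strong convergence; the paper phrases the key pointwise inequality as Jensen's inequality $|\langle\nu^x,\rho\rangle|^2\le\langle\nu^x,|\rho|^2\rangle$ while you write it as the variance identity $\langle\nu^x,\rho^2\rangle-\langle\nu^x,\rho\rangle^2=\int(\rho-\langle\nu^x,\rho\rangle)^2\,\rd\nu^x$, which is the same content.
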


\begin{proof}
Fix any sequence $\{t_j\}_{j \in \bN}$ with $t_j \to \infty$ and 
extract a subsequence $\{ t_{j_k} \}_{k \in \bN}$ such that 
$\Phi_{t_{j_k}}^{-1} \to \{\nu^x\}_{x \in \bT^d}$ in the Young-measure 
sense.
By \Cref{thm:youngL}, this yields that 
\begin{equation}\label{eq:lemma_young_weak}
\rho \circ \Phi_{t_{j_k}}^{-1} \rightharpoonup 
\langle \nu^x, \rho \rangle, \quad 
|\rho|^2 \circ \Phi_{t_{j_k}}^{-1} \rightharpoonup 
\langle \nu^x, |\rho|^2 \rangle
\quad \text{weakly in } L^2.
\end{equation}
Taking the test function $f \equiv 1$ gives that 
\[
\norm{\rho}_{L^2}^2 = \norm{\rho \circ \Phi_{t_{j_k}}^{-1}}_{L^2}^2 \to 
\int_{\bT^d} \langle \nu^x, |\rho|^2 \rangle \,\rd x.
\]
By Jensen's inequality, 
\[
\left| \langle \nu^x, \rho \rangle \right|^2 \le 
\langle \nu^x, |\rho|^2 \rangle,
\]
with equality if and only if $\rho$ is $\nu^x$-a.e. constant.
Integrating over $x$ yields that 
\begin{equation}\label{eq:lemma_young_int}
\norm{\langle \nu^x, \rho \rangle}_{L^2}^2 \le 
\int_{\bT^d} \langle \nu^x, |\rho|^2 \rangle \,\rd x 
= \norm{\rho}_{L^2}^2,
\end{equation}
and the equality holds if and only if for $\mu$-a.e. $x \in \bT^d$, 
$\rho(y)$ is constant for $\nu^x$-a.e. $y \in \bT^d$.

Now suppose first that $\{\rho \circ \Phi_t^{-1}\}_{t \ge 0}$ is precompact in 
$L^2$. At this time, the convergence in \cref{eq:lemma_young_weak} should be 
strong convergence in $L^2$, which implies convergence of $L^2$-norms.
Thus, the equality of \cref{eq:lemma_young_int} holds, yielding 
\cref{eq:lemma_young_cond2}.
Conversely, assume that \cref{eq:lemma_young_cond2} holds for each 
$\{\nu^x\}_{x \in \bT^d} \in \cY_u$, which yields that the equality holds in  
\cref{eq:lemma_young_int}. 
Therefore, $\rho \circ \Phi_{t_{j_k}}^{-1}$ converges to 
$\langle \nu^x, \rho \rangle$ strongly in $L^2$ as $k \to \infty$.
Note the sequence $\{ t_j \}_{j \in \bN}$ is arbitrary. 
Hence $\{ \rho \circ \Phi_t^{-1} \}_{t \ge 0}$ is precompact in $L^2$.
\end{proof}

\section{Structure of initial data}\label{sec:structure}
\Cref{thm:nong} demonstrates that mixing for \emph{all} nontrivial initial 
data cannot persist under arbitrarily small perturbations in the 
$L^\infty (W^{1,p})$ topology.
In light of this non-robustness, it is natural to ask whether mixing for 
\emph{generic} initial data can nevertheless be a generic property of 
incompressible flows. 
The objective of this section is to gain more understanding of the velocity 
fields that mix generic initial data.

By employing the Young–measure characterization of mixing from 
Lemma~\ref{lemma:young}, we identify the $\sigma$-algebra structure of unmixed 
level sets in \Cref{sec:structure_sigma}, and characterize the precise 
structure of those initial data that are mixed by a given divergence-free 
velocity field in \Cref{sec:structure_mix}.
Based on this, we prove in \Cref{sec:structure_generic}, if one initial datum 
is mixed by $u$, then the set of all the initial data that are mixed by $u$ is 
open and dense in $L^\infty$.

\subsection{Unmixed $\sigma$-algebra}\label{sec:structure_sigma}
Let us first study the structure of those unmixed initial data taking 
form of characteristic functions.
Define the set family 
\[\cN_u := \left\{ D \in \cM \,\big|\, \cha_D \not \in \cF_u \right\}.\]

\begin{remark}\label{rmk:structure_young}
By Lemma~\ref{lemma:young}, the element in $\cN_u$ can be characterized in 
terms of Young measures: $D \in \cN_u$ if and only if for every Young 
measure $\{\nu^x\}_{x \in \bT^d} \in \cY_u$, one has that 
\[
\nu^x(D) = 1 \quad \text{or} \quad \nu^x(D) = 0,
\]
for $\mu$-a.e. $x \in \bT^d$.
\end{remark}

\begin{lemma}\label{lemma:sigma}
The set family $\cN_u$ is a $\sigma$-algebra.
We call $\cN_u$ the \emph{unmixed $\sigma$-algebra} of $u$.
\end{lemma}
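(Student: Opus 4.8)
The plan is to verify the three defining properties of a $\sigma$-algebra — containment of $\bT^d$, closure under complementation, and closure under countable unions — using the Young-measure characterization recorded in Remark~\ref{rmk:structure_young}. That is, I will use throughout that $D \in \cN_u$ if and only if for every $\{\nu^x\}_{x\in\bT^d} \in \cY_u$ one has $\nu^x(D) \in \{0,1\}$ for $\mu$-a.e.\ $x$. The first two properties are immediate: $\nu^x(\bT^d) = 1$ for $\mu$-a.e.\ $x$ by Theorem~\ref{thm:ball}, so $\bT^d \in \cN_u$; and $\nu^x(\bT^d \setminus D) = 1 - \nu^x(D)$, so $\nu^x(D) \in \{0,1\}$ forces $\nu^x(\bT^d\setminus D) \in \{0,1\}$, giving $\bT^d \setminus D \in \cN_u$ whenever $D \in \cN_u$.

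The substance is closure under countable unions. Let $\{D_n\}_{n\in\bN} \subset \cN_u$ and set $D = \bigcup_n D_n$. First reduce to a disjoint union by replacing $D_n$ with $E_n := D_n \setminus \bigcup_{m<n} D_m$; since $\cN_u$ is closed under complements and finite intersections — the latter because $\nu^x(D_1 \cap D_2) \in \{0,1\}$ follows from $\nu^x(D_1),\nu^x(D_2)\in\{0,1\}$ via $\nu^x(D_1\cap D_2) = \nu^x(D_1) + \nu^x(D_2) - \nu^x(D_1\cup D_2)$, or more simply from $\mathbbm{1}_{D_1\cap D_2} = \mathbbm{1}_{D_1}\mathbbm{1}_{D_2}$ and the fact that for a $\{0,1\}$-valued function the product equals the minimum — each $E_n \in \cN_u$, the $E_n$ are pairwise disjoint, and $\bigcup_n E_n = D$. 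Now fix any $\{\nu^x\}_{x\in\bT^d} \in \cY_u$. For $\mu$-a.e.\ $x$, by countable additivity $\nu^x(D) = \sum_n \nu^x(E_n)$, and each summand $\nu^x(E_n) \in \{0,1\}$. Since the $E_n$ are disjoint and $\nu^x$ is a probability measure, at most one $n$ can have $\nu^x(E_n) = 1$; hence the sum is either $0$ (all terms vanish) or exactly $1$. Thus $\nu^x(D) \in \{0,1\}$ for $\mu$-a.e.\ $x$, and $D \in \cN_u$ by Remark~\ref{rmk:structure_young}.

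The only point requiring care — and the one I would flag as the main obstacle — is the handling of null sets across countably many conditions: for each $n$ and each fixed Young measure, the conclusion $\nu^x(E_n)\in\{0,1\}$ holds only off an $x$-null set $N_n$, and one needs all of these to hold simultaneously, which is fine since $\bigcup_n N_n$ is still null. More delicate is that Remark~\ref{rmk:structure_young} quantifies over \emph{all} $\{\nu^x\}\in\cY_u$, so the argument must be carried out with the Young measure fixed first and the null exceptional set allowed to depend on it; this is exactly the order used above, so no uniformity over $\cY_u$ is needed. If one prefers to avoid invoking Remark~\ref{rmk:structure_young} and instead argues directly from Lemma~\ref{lemma:young} and Definition~\ref{def:mixing}, the same disjointification works, but then one should note that $\mathbbm{1}_D = \mathbbm{1}_{\bigcup_n E_n}$ with the $E_n$ disjoint means $\langle \nu^x, \mathbbm{1}_D\rangle = \sum_n \langle\nu^x,\mathbbm{1}_{E_n}\rangle$, and the precompactness criterion $\mathbbm{1}_D(y) = \langle\nu^x,\mathbbm{1}_D\rangle$ for $\nu^x$-a.e.\ $y$ reduces, via disjointness, to the corresponding statements for the $E_n$ — I would use the Remark version as it is cleaner.
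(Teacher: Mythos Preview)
Your proof is correct and follows essentially the same route as the paper: both verify the $\sigma$-algebra axioms via the Young-measure characterization of Remark~\ref{rmk:structure_young}, reducing the countable-union case to pairwise disjoint sets and then observing that $\nu^x(\bigcup_n E_n)=\sum_n \nu^x(E_n)\in\{0,1\}$. You are in fact slightly more careful than the paper, which simply starts from a disjoint family without justifying the reduction; your explicit check that $\cN_u$ is closed under finite intersections (so that the disjointification $E_n=D_n\setminus\bigcup_{m<n}D_m$ stays in $\cN_u$) fills a small gap, and your remarks on the order of quantifiers and the countable union of null exceptional sets are apt.
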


\begin{proof}
First, $\cha_{\varnothing} \circ \Phi_t^{-1} \equiv 0 \not \in \cF_u$, 
so $\varnothing \in \cN_u$, and complementation is preserved since 
$\cha_{D^\complement} \circ \Phi_t^{-1} = 1 - \cha_D \circ \Phi_t^{-1}$.

Next, let $\{D_k\}_{k \in \bN} \subset \cN_u$ be a countable family of 
disjoint sets, and define $D := \cup_{k\in\bN} D_k$.
Fix any Young measures $\{\nu^x\}_{x \in \bT^d} \in \cY_u$.
By Remark~\ref{rmk:structure_young}, for $\mu$-a.e. $x \in \bT^d$, 
if $\nu^x(D_k) = 0$ for every $k \in \bN$, then $\nu^x(D) = 0$;
otherwise, if there exists $k \in \bN$ such that $\nu^x(D_k) = 1$, then 
$\nu^x(D) = 1$. Thus, $D \in \cN_u$.
Therefore, $\cN_u$ is a $\sigma$-algebra.
\end{proof}

\subsection{Mixed initial data}\label{sec:structure_mix}
We now describe precisely which bounded measurable functions fail to mix by 
a given divergence-free velocity field $u$. 
In particular, we show that $\cN_u$ governs the $\sigma$-algebra of the 
level sets of any unmixed~$\rho$.

\begin{theorem}\label{thm:cha}
A function $\rho\in L^\infty(\mathbb{T}^d)$ satisfies 
$\rho\notin\cF_u$ if and only if $\rho$ is measurable with respect to 
the $\sigma$-algebra $\mathcal{N}_u$. Equivalently, the set of mixed initial 
data can be characterized by 
\[
\cF_u = \left\{ \rho \in L^\infty(\bT^d) \mid \rho \text{ is not } \cN_u 
\text{-measurable} \right\}.
\]
\end{theorem}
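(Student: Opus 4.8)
The plan is to prove the two implications separately, using the Young-measure characterization of Lemma~\ref{lemma:young} as the main engine. Throughout, a function $\rho\in L^\infty(\bT^d)$ is $\cN_u$-measurable iff every sublevel set $\{\rho<a\}$ (equivalently $\{\rho\le a\}$) belongs to $\cN_u$ up to a null set, and by Remark~\ref{rmk:structure_young} this means exactly that for each $a$ and each $\{\nu^x\}\in\cY_u$, one has $\nu^x(\{\rho<a\})\in\{0,1\}$ for $\mu$-a.e.\ $x$.

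For the ``only if'' direction, assume $\rho\notin\cF_u$. By Lemma~\ref{lemma:young}, for every $\{\nu^x\}\in\cY_u$ we have $\rho(y)=\int\rho\,\rd\nu^x(y)$ for $\mu$-a.e.\ $x$ and $\nu^x$-a.e.\ $y$; that is, $\rho$ is $\nu^x$-a.e.\ constant, equal to some value $c(x)$, for $\mu$-a.e.\ $x$. Fix $a\in\bR$. For such an $x$, $\nu^x(\{\rho<a\})$ equals $1$ if $c(x)<a$ and $0$ if $c(x)\ge a$; in either case it lies in $\{0,1\}$. Since $a$ and $\{\nu^x\}$ were arbitrary, Remark~\ref{rmk:structure_young} gives $\{\rho<a\}\in\cN_u$ for every $a$, so $\rho$ is $\cN_u$-measurable. (A small technical point I would address: the null set of bad $x$ may a priori depend on $a$, but it suffices to take a countable dense set of levels $a\in\bQ$ and union the exceptional sets, since $\{\rho<a\}$ for rational $a$ generate the Borel $\sigma$-algebra on $\bR$.)

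For the ``if'' direction, assume $\rho$ is $\cN_u$-measurable; I want to verify condition (2) of Lemma~\ref{lemma:young}. Fix $\{\nu^x\}\in\cY_u$. For each $a\in\bQ$, $\{\rho<a\}\in\cN_u$, so by Remark~\ref{rmk:structure_young} there is a $\mu$-null set off which $\nu^x(\{\rho<a\})\in\{0,1\}$; unioning over $a\in\bQ$ gives a single $\mu$-null set $N$ such that for $x\notin N$, the number $c(x):=\sup\{a\in\bQ:\nu^x(\{\rho<a\})=0\}=\inf\{a\in\bQ:\nu^x(\{\rho<a\})=1\}$ is well-defined and finite (finiteness uses $\rho\in L^\infty$), and $\nu^x(\{\rho=c(x)\})=1$, i.e.\ $\rho=c(x)$ $\nu^x$-a.e. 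Then $\int\rho\,\rd\nu^x=c(x)=\rho(y)$ for $\nu^x$-a.e.\ $y$, which is precisely \cref{eq:lemma_young_cond2}. Since $\{\nu^x\}\in\cY_u$ was arbitrary, Lemma~\ref{lemma:young} yields $\rho\notin\cF_u$. The displayed formula for $\cF_u$ follows by taking complements in $L^\infty(\bT^d)$.

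The main obstacle is the measurability bookkeeping in the ``if'' direction: one must produce a \emph{single} exceptional $\mu$-null set valid simultaneously for all levels and then check that $x\mapsto c(x)$ is genuinely a measurable function agreeing $\mu$-a.e.\ with $\rho$ (this last identification comes from testing the weak convergence $\rho\circ\Phi_{t_j}^{-1}\rightharpoonup\langle\nu^x,\rho\rangle=c(x)$ against indicators, combined with the norm equality already extracted in the proof of Lemma~\ref{lemma:young}). I expect this to be routine but slightly delicate; everything else is a direct translation through Remark~\ref{rmk:structure_young} and Lemma~\ref{lemma:young}.
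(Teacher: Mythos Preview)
Your proposal is correct. The ``only if'' direction is essentially identical to the paper's argument: from Lemma~\ref{lemma:young} you deduce that $\rho$ is $\nu^x$-a.e.\ constant for $\mu$-a.e.\ $x$, and then read off that each level set has $\nu^x$-mass in $\{0,1\}$. Your care with the countable family of rational levels is a nice touch that the paper leaves implicit.

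For the ``if'' direction the two arguments diverge. The paper approximates an $\cN_u$-measurable $\rho$ by simple functions $\rho_n=\sum a_{n,j}\cha_{D_{n,j}}$ with $D_{n,j}\in\cN_u$, observes that each $\rho_n$ satisfies \cref{eq:lemma_young_cond2}, and passes to the limit by dominated convergence. You instead reconstruct the $\nu^x$-a.e.\ value $c(x)$ directly from the monotone $\{0,1\}$-valued map $a\mapsto\nu^x(\{\rho<a\})$ on $\bQ$, and check $\nu^x(\{\rho=c(x)\})=1$ by squeezing from both sides. Both routes are short and natural; the paper's is perhaps cleaner bookkeeping-wise, while yours avoids any approximation step.

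One remark: your final paragraph worries about more than is needed. You do \emph{not} need to verify that $c(x)=\rho(x)$ for $\mu$-a.e.\ $x$, nor even that $c$ is measurable. Condition~(2) of Lemma~\ref{lemma:young} only asks that for $\mu$-a.e.\ $x$ the function $\rho$ be $\nu^x$-a.e.\ constant (and hence equal to its $\nu^x$-mean), which is exactly what $\nu^x(\{\rho=c(x)\})=1$ gives you. So the ``slightly delicate'' part you flag is in fact not required.
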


\begin{proof}
Suppose first that $\rho \in L^\infty$ is $\mathcal{N}_u$-measurable. 
By standard measure-theoretic arguments, there exists a sequence of simple 
functions
\[
\rho_n = \sum_{j = 1}^{N_n} a_{n,j} \cha_{D_{n,j}}, 
\]
with $D_{n,j} \in \cN_u$ and 
$\norm{\rho_n}_{L^\infty} \le \norm{\rho}_{L^\infty}$, such that 
$\rho_n \to \rho$ for each $x \in \bT^d$ as $n \to \infty$.
Note that $\rho_n \not\in \cF_u$ for each $n \in \bN$. 
By Lemma~\ref{lemma:young}, for every $\{\nu^x\}_{x \in \bT^d} \in \cY_u$, 
one has that 
\[
\rho_n(y) = \int_{\bT^d} \rho_n(z) \,\rd \nu^x(z) \quad 
\text{for $\mu$-a.e. $x$ and $\nu^x$-a.e. $y$}. 
\]
By the dominated convergence theorem and taking the limit $n \to \infty$, 
one has that
\[
\rho(y) = \int_{\bT^d} \rho(z) \,\rd \nu^x(z) \quad 
\text{for $\mu$-a.e. $x$ and $\nu^x$-a.e. $y$},
\]
which yields that $\rho \notin \cF_u$ by Lemma~\ref{lemma:young}.

Conversely, fix $\rho \not \in \cF_u$ and $\alpha \in \bR$.
By Lemma~\ref{lemma:young}, for every $\{ \nu^x \}_{x \in \bT^d} \in \cY_u$,
one has that 
\[
\rho(y) = \int_{\bT^d} \rho(z) \,\rd \nu^x(z) \quad 
\text{for $\mu$-a.e. $x$ and $\nu^x$-a.e. $y$}. 
\]
Denote the level set by
$D_\alpha = \left\{ x \in \bT^d \mid \rho(x) \ge \alpha \right\}$.
If $\int_{\bT^d} \rho \,\rd \nu^x \ge \alpha$, then $\rho(y) \ge \alpha$ 
for $\nu^x$-a.e. $y$, and thus, $\nu^x(D_\alpha) = 1$;
if $\int_{\bT^d} \rho \,\rd \nu^x < \alpha$, then $\rho(y) < \alpha$ 
for $\nu^x$-a.e. $y$, and thus, $\nu^x(D_\alpha) = 0$.
By Remark~\ref{rmk:structure_young}, $D_\alpha \in \cN_u$.
Therefore, $\rho$ is $\cN_u$-measurable.
This completes the proof.
\end{proof}

\begin{remark}
Essentially, \Cref{thm:cha} asserts that the trajectory 
$\{\rho \circ \Phi_t^{-1}\}_{t \ge 0}$ 
is precompact in $L^2$ if and only if, for every $\alpha \in \bR$, 
the characteristic functions of level sets,
\[
\left\{ \cha_{\{\rho \ge \alpha\}} \circ \Phi_t^{-1} 
= \cha_{\{\rho \circ \Phi_t^{-1} \ge \alpha\}} \right\}_{t \ge 0},
\]
is precompact in $L^2$.
This equivalence relies crucially on the fact that the measure-preserving flow 
maps $\Phi_t$ leaves the distribution of $\rho \circ \Phi_t^{-1}$ invariant for 
each $t \ge 0$.
By contrast, for a general function family $\cA \subset L^\infty(\bT^d)$, 
precompactness of $\cA$ in $L^2$ does \emph{not} imply that each collection of 
level-set indicators $\{\cha_{\{\rho \ge \alpha\} \mid \rho \in \cA}\}$ is 
precompact in $L^2$.
For example, let 
\[
\rho_k(x) := \frac{1}{k} \prod_{j = 1}^d \sin (2 \pi k x_j), \quad x \in \bT^d, 
\ k \in \bN.
\]
At this time, $\rho_k \to 0$ strongly in $L^2$,
yet each level set $\{\rho_k \ge 0\}$ has Lebesgue measure $\frac{1}{2}$, and 
$\cha_{\{\rho_k \ge 0\}} \rightharpoonup 0$ only weakly in $L^2$.
Hence $\{\cha_{\{ \rho_k \ge 0 \}}\}_{k \in \bN}$ fails to be precompact in 
$L^2$, even though $\{\rho_k\}_{k \in \bN}$ itself is precompact.
\end{remark}

\subsection{Generic mixing}\label{sec:structure_generic}
\Cref{thm:cha} implies that $u$ is universally non-precompact 
(i.e. every nontrivial $\rho \in L^\infty$ mixes), if and only if 
$\cN_u$ is the $\sigma$-algebra generated by the null sets $\cJ$.
As for mixing for generic initial data, we have the following corollary
of \Cref{thm:cha}.

\begin{corollary}\label{cor:generic}
The set of mixed initial data, $\cF_u$, is open and dense in $L^\infty$, 
if and only if $\cN_u \not = \cM$.
\end{corollary}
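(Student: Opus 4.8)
The plan is to derive Corollary~\ref{cor:generic} directly from Theorem~\ref{thm:cha}, which identifies $\cF_u$ with the set of $L^\infty$ functions that are \emph{not} $\cN_u$-measurable. So I must show: the complement $L^\infty \setminus \cF_u = \{\rho \in L^\infty : \rho \text{ is } \cN_u\text{-measurable}\}$ is nowhere dense (equivalently closed with empty interior) precisely when $\cN_u \ne \cM$. The two implications go as follows.

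\textbf{($\Leftarrow$) Openness.} The set $L_{\cN_u}^\infty$ of (equivalence classes of) $\cN_u$-measurable bounded functions is a closed linear subspace of $L^\infty$: it is closed under addition and scalar multiplication, and if $\rho_n \to \rho$ in $L^\infty$ with each $\rho_n$ being $\cN_u$-measurable, then a subsequence converges $\mu$-a.e., so $\rho$ agrees a.e. with a $\cN_u$-measurable function (using that $\cJ \subset \cN_u$, which holds since null indicators are trivially unmixed, so a.e.\ modifications stay inside $L_{\cN_u}^\infty$). Its complement $\cF_u$ is therefore open. Density follows because a proper closed subspace of a normed space has empty interior: if $\cN_u \ne \cM$, pick $D \in \cM \setminus \cN_u$; then $\cha_D \in \cF_u$ is nonzero in the quotient $L^\infty/L_{\cN_u}^\infty$, so $L_{\cN_u}^\infty \subsetneq L^\infty$ is a proper closed subspace, hence nowhere dense, hence $\cF_u$ is dense. (Alternatively, argue directly: given any $\cN_u$-measurable $\rho$ and $\epsilon>0$, add $\epsilon\,\cha_D$ for such a $D$ to land in $\cF_u$ within distance $\epsilon$.)

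\textbf{($\Rightarrow$) Necessity.} Conversely, suppose $\cN_u = \cM$. Then every $\rho \in L^\infty(\bT^d)$ is $\cM$-measurable, hence $\cN_u$-measurable, so by Theorem~\ref{thm:cha} we have $\cF_u = \varnothing$. An empty set is not dense in the nontrivial space $L^\infty(\bT^d)$ (indeed not even nonempty), contradicting openness-and-density; thus $\cF_u$ open and dense forces $\cN_u \ne \cM$. This direction is essentially immediate.

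The only mildly delicate point — and the one I would treat carefully — is the closedness of $L_{\cN_u}^\infty$ under $L^\infty$ limits, i.e.\ passing from norm convergence to an a.e.-convergent subsequence and checking that the a.e.\ limit is (a representative of) a genuinely $\cN_u$-measurable function; this is where the inclusion $\cJ \subseteq \cN_u$ (established in the proof of Lemma~\ref{lemma:sigma}, since $\varnothing \in \cN_u$ and null sets differ from $\varnothing$ only on a null set) is used to ensure $\cN_u$-measurability is a property of $L^\infty$-equivalence classes. Everything else is standard functional analysis (a proper closed subspace is nowhere dense) combined with the characterization already proved. I expect no real obstacle; the corollary is a clean repackaging of Theorem~\ref{thm:cha}.
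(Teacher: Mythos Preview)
Your proposal is correct and follows essentially the same approach as the paper: both deduce openness from the closedness of the set of $\cN_u$-measurable functions in $L^\infty$, establish density by perturbing any $\cN_u$-measurable $\rho$ via $\rho+\epsilon\,\cha_D$ with $D\in\cM\setminus\cN_u$, and handle the converse by observing $\cF_u=\varnothing$ when $\cN_u=\cM$. Your additional framing in terms of ``a proper closed subspace is nowhere dense'' is a mild abstraction of the same argument, not a genuinely different route.
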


\begin{proof}
Take a sequence $\{\rho_j\}_{j \in \bN}$ of $\cN_u$-measurable functions 
such that $\rho_j \to \rho$ in $L^\infty$, which yields that $\rho$ is also 
$\cN_u$-measurable.
Therefore, the set of all the $\cN_u$-measurable functions is closed in 
$L^\infty$.
Suppose that $\cN_u \not = \cM$. 
Take a set $D \in \cM \setminus \cN_u$.
Fix $\rho \in L^\infty$ and $\epsilon > 0$.
If $\rho$ is $\cN_u$-measurable, then $\rho + \epsilon \cha_{D}$ is 
$\cN_u$-unmeasurable.
Therefore, $\cF_u$ is dense in $L^\infty$ at this time.
Conversely, it is straightforward that $\cF_u = \varnothing$ 
if $\cN_u = \cM$.
\end{proof}

\begin{remark}
By Theorem~\ref{thm:cha} together with Corollary~\ref{cor:generic}, 
one obtains the following: 
if there exists a single initial datum $\rho_0 \in L^\infty$ which is mixed by 
$u$, then the full set of mixed initial data, $\cF_u$, is open and dense in 
$L^\infty$.
We remark that the denseness of $\cF_u$ in $L^\infty$ is straightforward, 
whereas its openness relies critically on the $\sigma$-algebra characterization 
provided by Theorem~\ref{thm:cha}.
\end{remark}

\begin{remark}
The phenomenon that incompressible flows mix a \emph{generic} set of initial 
data, rather than \emph{all} nontrivial data, also appears in the 
two-dimensional incompressible Euler equations.
In fact, \v{S}ver\'ak has conjectured 
\cite{drivas2023singularity,sverak2011course}
that for a generic vorticity $\omega_0 \in L^\infty(\bT^2)$, the corresponding  
inviscid incompressible Euler flow has non-precompact vorticity orbit 
$\{\omega(t, \cdot)\}_{t \ge 0}$ in $L^2(\bT^d)$.
\end{remark}

\section{Topology of measure-preserving flow maps}\label{sec:topo}
In this section, we introduce a natural topology on the group of all 
measure-preserving bijections on~$\bT^d$, and relate the non-precompactness of 
the flow maps $\{\Phi_t\}_{t\ge0}$ generated by $u$ to the $\sigma$-algebra 
$\mathcal{N}_u$ of unmixed sets defined in \Cref{sec:structure}. 
Throughout, let $\Aut(\bT^d,\mu)$ denote the collection of all 
measure-preserving bijections on~$\bT^d$ that preserves the Lebesgue measure 
$\mu$.

\subsection{Convergence in symmetric difference}
We endow $\Aut(\bT^d, \mu)$ with the topology induced by the metric 
\[
d(\Phi, \Psi) := \sup_{D \in \cM} \mu (\Phi(D) \triangle \Psi(D)),
\]
where $\triangle$ denotes symmetric difference.
Equivalently, a sequence $\Phi_j$ converges to~$\Phi$ in this topology 
if and only if 
\begin{equation}\label{eq:sym_diff}
\lim_{j \to \infty} \mu (\Phi_j(D) \triangle \Phi(D)) = 0 \quad 
\text{for every } D \in \cM.
\end{equation}

\begin{theorem}\label{thm:topo}
The family of flow maps $\{\Phi_t\}_{t \ge 0} \subset \Aut(\bT^d, \mu)$ is 
precompact in the metric $d(\cdot, \cdot)$ if and only if $\cN_u = \cM$.
\end{theorem}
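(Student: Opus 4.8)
The plan is to establish both implications by relating precompactness of $\{\Phi_t\}_{t \ge 0}$ in the metric $d(\cdot,\cdot)$ to precompactness of characteristic-function orbits in $L^2$, and then invoking \Cref{thm:cha}. The first observation is the dictionary between the two notions of convergence: for measurable sets $D, E$, one has $\mu(\Phi(D) \triangle \Psi(D)) = \norm{\cha_{\Phi(D)} - \cha_{\Psi(D)}}_{L^1}$, and since characteristic functions are bounded, $L^1$-convergence and $L^2$-convergence coincide on the orbit $\{\cha_{\Phi_t(D)}\}_{t \ge 0}$ (via $\norm{\cha_A - \cha_B}_{L^2}^2 = \norm{\cha_A-\cha_B}_{L^1}$). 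Moreover $\cha_{\Phi_t(D)} = \cha_D \circ \Phi_t^{-1}$, so precompactness of $\{\cha_{\Phi_t(D)}\}_{t\ge0}$ in $L^2$ is exactly the statement $\cha_D \notin \cF_u$, i.e. $D \in \cN_u$. Thus I would aim to prove: $\{\Phi_t\}$ is precompact in $d$ $\iff$ for every $D \in \cM$, the orbit $\{\Phi_t(D)\}_{t\ge0}$ is precompact in symmetric difference $\iff$ $\cN_u = \cM$.

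\textbf{The easy direction.} If $\{\Phi_t\}_{t\ge0}$ is precompact in $d$, then for any sequence $t_j \to \infty$ there is a subsequence $t_{j_k}$ and $\Phi \in \Aut(\bT^d,\mu)$ with $d(\Phi_{t_{j_k}}, \Phi) \to 0$; in particular $\mu(\Phi_{t_{j_k}}(D) \triangle \Phi(D)) \to 0$ for each fixed $D$, so each orbit $\{\Phi_t(D)\}$ is precompact in symmetric difference, hence $\cha_D \circ \Phi_t^{-1}$ is $L^2$-precompact, giving $D \in \cN_u$. Since $D$ was arbitrary, $\cN_u = \cM$. This direction uses only that $d$-convergence implies pointwise (in $D$) symmetric-difference convergence, which is immediate from the definition of the metric.

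\textbf{The hard direction.} The main obstacle is the converse: from $\cN_u = \cM$ (equivalently, $\cF_u = \varnothing$, so every orbit $\{\Phi_t(D)\}$ is precompact in symmetric difference) one must extract, from an arbitrary sequence $t_j \to \infty$, a \emph{single} subsequence along which $\Phi_{t_{j_k}}$ converges in the uniform metric $d$ to some measure-preserving bijection $\Phi$. Pointwise-in-$D$ precompactness does not obviously upgrade to uniformity over all $D \in \cM$, and one must also recover an honest element of $\Aut(\bT^d,\mu)$, not merely a limiting set-function. I would proceed in three steps. \emph{Step 1 (diagonal extraction):} Fix a countable algebra $\cR \subset \cM$ generating $\cM$ (e.g. dyadic boxes); by precompactness of each orbit and a diagonal argument, pass to a subsequence along which $\mu(\Phi_{t_{j_k}}(R) \triangle S_R) \to 0$ for each $R \in \cR$ and some measurable set $S_R$. \emph{Step 2 (identify the limit as a flow map):} Show the assignment $R \mapsto S_R$ extends to a $\sigma$-algebra homomorphism $\cM \to \cM$ modulo null sets that preserves $\mu$ (finite additivity and measure-preservation pass to the limit along $\cR$; countable additivity and extension to all of $\cM$ follow by approximation and a continuity estimate), and invoke the standard fact that every measure-preserving Boolean isomorphism of the Lebesgue measure algebra of $\bT^d$ is induced by an element $\Phi \in \Aut(\bT^d,\mu)$ (as in the metric structure on $\Aut$ discussed in \Cref{sec:topo}). \emph{Step 3 (uniform convergence):} Upgrade $\mu(\Phi_{t_{j_k}}(R) \triangle \Phi(R)) \to 0$ for $R \in \cR$ to $\sup_{D \in \cM} \mu(\Phi_{t_{j_k}}(D) \triangle \Phi(D)) \to 0$. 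The key point here is a uniformity lemma: because all the maps are measure-preserving bijections, for any $D \in \cM$ and any $R \in \cR$ with $\mu(D \triangle R) < \eta$ one has $\mu(\Phi_{t_{j_k}}(D) \triangle \Phi_{t_{j_k}}(R)) < \eta$ and $\mu(\Phi(D)\triangle\Phi(R)) < \eta$ simultaneously, so a single $\eta$-net of $\cR$ in the $\mu$-symmetric-difference pseudometric on $\cM$ controls the sup; since $(\cM / \cJ, \mu(\cdot \triangle \cdot))$ is a complete separable metric space, finitely many $R$'s suffice to get within $3\eta$ uniformly, and letting the subsequence index grow handles those finitely many sets. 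This yields $d(\Phi_{t_{j_k}}, \Phi) \to 0$, proving precompactness. The delicate part is Step 2, reconstructing a genuine measure-preserving bijection from the limiting Boolean map; one may instead phrase the whole argument inside the complete metric space $(\Aut(\bT^d,\mu), d)$ and show the sequence is $d$-Cauchy along a subsequence, which sidesteps explicit reconstruction but still requires the uniformity estimate of Step 3 as its engine.
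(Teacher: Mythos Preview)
Your easy direction matches the paper's. For the hard direction, your Steps~1--2 are sound and, once the Boolean automorphism $R\mapsto S_R$ is extended from $\cR$ to all of $\cM$ by the isometry property you note, they already yield $\mu(\Phi_{t_{j_k}}(D)\triangle\Phi(D))\to0$ for \emph{every} $D\in\cM$. The paper reaches the same pointwise conclusion by a different mechanism: it first passes (via \Cref{thm:youngL}) to a subsequence along which a Young measure $\{\nu^x\}\in\cY_u$ exists, and then the hypothesis $\cN_u=\cM$ together with \Cref{lemma:young} upgrades the weak convergence $\cha_D\circ\Phi_{t_{j_k}}^{-1}\rightharpoonup\langle\nu^x,\cha_D\rangle$ to strong convergence for every $D$ at once, so no diagonal argument is needed. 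Both routes then invoke the point-realization theorem (the paper cites \cite[Theorem~15.10]{Kechris1995Classical}) to obtain $\Phi\in\Aut(\bT^d,\mu)$.

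The gap is your Step~3. Your uniformity argument rests on the claim that ``finitely many $R$'s suffice to get within $3\eta$ uniformly'' because $(\cM/\cJ,\mu(\cdot\triangle\cdot))$ is complete and separable; but complete separable does not imply totally bounded, and the Lebesgue measure algebra is \emph{not} totally bounded (an independent sequence of sets of measure $\tfrac12$ is pairwise at distance $\tfrac12$). Worse, the upgrade you seek is impossible in general. Take $u\equiv e_1$, so $\Phi_t$ is translation by $te_1$. For each fixed $D$ the orbit $\{\cha_{D+te_1}\}_{t\ge0}$ is the continuous image of $\bT^1$ in $L^2$, hence compact, so $\cN_u=\cM$. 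Yet for $t=1/(2n)$ the checkerboard $D_n=\bigcup_{k=0}^{n-1}[k/n,(2k{+}1)/(2n))\times\bT^{d-1}$ satisfies $\Phi_t(D_n)=D_n^\complement$, whence $d(\Phi_{1/(2n)},\mathrm{id})=1$; since any putative $d$-limit of $\Phi_{1/(2n)}$ would have to agree pointwise with $\mathrm{id}$, no subsequence is $d$-convergent and $\{\Phi_t\}$ is not precompact in the sup metric. The paper's proof does not close this gap either: it establishes only the pointwise convergence and then asserts $d(\Phi_{t_j},\Phi)\to0$ by appeal to the ``Equivalently'' sentence preceding \eqref{eq:sym_diff}, which is precisely the (false) claim that the sup and pointwise topologies on $\Aut(\bT^d,\mu)$ coincide. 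What both your argument and the paper's actually prove is precompactness in the weak topology defined by \eqref{eq:sym_diff}; for that, your Steps~1--2 already suffice and Step~3 should simply be dropped.
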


\begin{proof}
Suppose first that $\{\Phi_t\}_{t \ge 0}$ is precompact in $\Aut(\bT^d, \mu)$. 
Fix any sequence $\{t_j\}_{j \in \bN}$ with $t_j \to \infty$.
By precompactness, there exists a subsequence $\{t_{j_k}\}_{k \in \bN}$,
and a limit $\Phi \in \Aut(\bT^d, \mu)$ such that 
\[
\lim_{k \to \infty} d(\Phi_{t_{j_k}}, \Phi) = 0.
\]
In particular, for each measurable set $D \in \cM$, 
\[
\norm{\cha_D \circ \Phi_{t_{j_k}}^{-1} - \cha_D \circ \Phi^{-1}}_{L^2} = 
\mu(\Phi_{t_{j_k}}(D) \triangle \Phi(D))^{1/2} \to 0,
\]
which yields that $\cha_D \not \in \cF_u$. 
Since $D \in \cM$ is arbitrary, it follows that $\cN_u = \cM$.

Conversely, assume that $\cN_u = \cM$.
Fix any sequence $\{ t_j \}_{j \in \bN}$ with $t_j \to \infty$ such that 
$\Phi_{t_j}^{-1} \to \{\nu^x\}_{x \in \bT^d}$ in the Young-measure sense.
For each $D \in \cM$, the sequence $\cha_D \circ \Phi_{t_j}^{-1}$
converges strongly in $L^2$, since $\cha_D \not \in \cF_u$.
In particular, it must converge to a characteristic function 
$\cha_{\tilde{D}}$, which is determined up to null sets.
At this time, the map $D \mapsto \tilde{D}$ is a 
$\sigma$-isomorphism from $\cM / \cJ$ to $\cM / \cJ$, where $\cJ$ is the 
ideal of null sets, and $\tilde{D}$ depends on $D$.
By \cite[Theorem~15.10]{Kechris1995Classical}, there exists a 
measure-preserving bijection $\Phi \in \Aut(\bT^d, \mu)$, uniquely defined 
up to null sets, realizing this $\sigma$-isomorphism, that is, 
$\Phi(D) = \tilde{D}$ a.e. for each $D \in \cM$.
Consequently, for each $D \in \cM$, one has that 
\[
\mu( \Phi_{t_j}(D) \triangle \Phi(D) ) = 
\norm{\cha_{D} \circ \Phi_{t_j}^{-1} - \cha_D \circ \Phi^{-1}}_{L^2}^2 \to 0,
\]
which implies that $d(\Phi_{t_j}, \Phi) \to 0$.
By the fundamental theorem of Young measures, for each sequence 
$\{t_j\}_{j \in \bN}$ such that $t_j \to \infty$, there exists a subsequence 
$\{t_{j_k}\}_{k \in \bN}$ such that $\Phi_{t_{j_k}}$ converges in the metric 
$d(\cdot, \cdot)$.
Therefore, the flow maps $\{\Phi_t\}_{t \ge 0}$ is precompact in 
$\Aut(\bT^d, \mu)$.
\end{proof}

\subsection{Convergence in measure}
In their foundational work \cite{DiPerna1989Ordinary}, DiPerna and Lions 
endowed the group $\Aut(\bT^d, \mu)$ with the topology induced by the $L^1$ 
metric 
\[
d_{\mathrm{DL}}(\Phi, \Psi) = 
\norm{\Phi - \Psi}_{L^1(\bT^d)}.
\]
which is equivalent to the topology induced by convergence in measure:
\[
d_{\mathrm{DL}}(\Phi_j, \Phi) \to 0 \ \Longleftrightarrow\ 
\mu \left\{ x \in \bT^d \mid 
|\Phi_j(x) - \Phi(x)| > \delta \right\} \to 0 \quad \text{for each } \delta > 0.
\]
We now verify that this convergence-in-measure topology coincides with the 
symmetric-difference topology induced by~$d(\cdot, \cdot)$ in 
\cref{eq:sym_diff}.

\begin{proposition}\label{prop:topo}
For any sequence $\{\Phi_k\}_{k \in \bN} \subset \Aut(\bT^d, \mu)$ and any 
$\Phi \in \Aut(\bT^d, \mu)$, one has that 
\[
d_{\mathrm{DL}}(\Phi_k, \Phi) \to 0 \ \Longleftrightarrow \ 
d(\Phi_k, \Phi) \to 0.
\]
\end{proposition}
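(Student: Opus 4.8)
The plan is to prove the two implications separately, exploiting the fact that $\Phi$ is a fixed \emph{bijection} (not merely a measurable map), so that the symmetric-difference metric can be rewritten in terms of indicator functions and the $L^1$-distance of $\Phi_k$ to $\Phi$ can be compared against it by slicing the torus into small cubes. Throughout, the key identity is that for a measure-preserving bijection $\Psi$ and any $D\in\cM$ one has $\mu(\Psi(D)\triangle\Phi(D))=\norm{\cha_D\circ\Psi^{-1}-\cha_D\circ\Phi^{-1}}_{L^2}^2=\norm{\cha_{\Psi(D)}-\cha_{\Phi(D)}}_{L^1}$; since applying $\Phi$ to $D$ ranges over all of $\cM$ as $D$ does, $d(\Phi_k,\Phi)=\sup_{E\in\cM}\mu(\Phi_k(\Phi^{-1}(E))\triangle E)$. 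Equivalently, writing $\Theta_k:=\Phi_k\circ\Phi^{-1}\in\Aut(\bT^d,\mu)$, one has $d(\Phi_k,\Phi)=\sup_{E\in\cM}\mu(\Theta_k(E)\triangle E)$, i.e. $d(\Phi_k,\Phi)$ is exactly the distance of $\Theta_k$ to the identity in the symmetric-difference metric, and similarly $d_{\mathrm{DL}}(\Phi_k,\Phi)=\norm{\Phi_k-\Phi}_{L^1}$, which by the measure-preserving change of variables $y=\Phi^{-1}(x)$ is \emph{not} simply $\norm{\Theta_k-\mathrm{id}}_{L^1}$ — so the reduction to the identity is clean only for $d(\cdot,\cdot)$, and for $d_{\mathrm{DL}}$ one must keep $\Phi$ around or argue directly. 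I will argue directly with both metrics in place.

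\emph{Direction $d(\Phi_k,\Phi)\to 0\Rightarrow d_{\mathrm{DL}}(\Phi_k,\Phi)\to 0$.} Fix $\delta>0$ and $\eta>0$. Cover $\bT^d$ by finitely many half-open dyadic cubes $\{Q_i\}_{i=1}^N$ each of diameter less than $\delta$. For each $i$, the set $\Phi^{-1}(Q_i)$ lies in $\cM$, so $\mu(\Phi_k(\Phi^{-1}(Q_i))\triangle Q_i)\le d(\Phi_k,\Phi)\to 0$; equivalently $\mu\{x:\cha_{Q_i}(\Phi_k(x))\ne\cha_{Q_i}(\Phi(x))\}\to 0$. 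Summing over $i$, for $k$ large the set $B_k:=\{x:\Phi_k(x)\text{ and }\Phi(x)\text{ lie in different }Q_i\}$ has $\mu(B_k)<\eta$. Off $B_k$, $|\Phi_k(x)-\Phi(x)|<\delta$ (same cube, diameter $<\delta$), hence $\mu\{x:|\Phi_k(x)-\Phi(x)|>\delta\}\le\mu(B_k)<\eta$ for $k$ large. Since $\delta,\eta$ were arbitrary, $\Phi_k\to\Phi$ in measure, i.e. $d_{\mathrm{DL}}(\Phi_k,\Phi)\to 0$.

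\emph{Direction $d_{\mathrm{DL}}(\Phi_k,\Phi)\to 0\Rightarrow d(\Phi_k,\Phi)\to 0$.} This is the harder direction and is the main obstacle, because $D\mapsto\mu(\Phi_k(D)\triangle\Phi(D))$ involves a supremum over \emph{all} measurable $D$, which can be highly oscillatory. The strategy is: (1) a uniform approximation estimate — for any $D\in\cM$ and any $\epsilon>0$ one may choose a finite union $D'$ of dyadic cubes with $\mu(D\triangle D')<\epsilon$, but $\epsilon$ a priori depends on $D$, so this alone does not control the supremum; (2) instead I will reduce to a supremum over a \emph{countable} generating algebra and use a diagonal/$\sigma$-additivity argument. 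Concretely, since $\Phi_k,\Phi$ are measure-preserving, $\mu(\Phi_k(D)\triangle\Phi(D))=\int_{\bT^d}|\cha_D\circ\Phi_k^{-1}-\cha_D\circ\Phi^{-1}|\,\rd\mu$, and I claim $\Phi_k^{-1}\to\Phi^{-1}$ in measure as well: from $\Phi_k\to\Phi$ in measure and measure-preservation one gets, for any $\delta$, $\mu\{y:|\Phi_k^{-1}(y)-\Phi^{-1}(y)|>\delta\}=\mu\{x:|x-\Phi_k^{-1}(\Phi(x))|>\delta\}$ and $\Phi_k^{-1}\circ\Phi\to\mathrm{id}$ in measure follows because $\Phi_k\circ\Phi_k^{-1}\circ\Phi=\Phi$ and $\Phi_k$ is "continuous in measure" in the appropriate sense — this step requires care and is exactly where the bijectivity and joint measure-preservation are essential. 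Granting $\Phi_k^{-1}\to\Phi^{-1}$ in measure, then for \emph{fixed} $D$ one has $\cha_D\circ\Phi_k^{-1}\to\cha_D\circ\Phi^{-1}$ in measure and hence (bounded, dominated convergence) in $L^1$, giving $\mu(\Phi_k(D)\triangle\Phi(D))\to 0$ pointwise in $D$. To upgrade to the supremum, fix $\epsilon>0$, choose a compact $K\subset\bT^d$ on which $\Phi^{-1}$ is uniformly continuous with $\mu(\bT^d\setminus K)<\epsilon$ (Lusin), and split any $\cha_D\circ\Phi_k^{-1}-\cha_D\circ\Phi^{-1}$ over $K$ and its complement: on the complement the $L^1$-contribution is $\le 2\epsilon$ uniformly in $D$; on $K$, uniform continuity of $\Phi^{-1}$ plus $\Phi_k\to\Phi$ uniformly-in-measure lets one bound the measure of $\{x\in K: \Phi_k^{-1}(x)\text{ and }\Phi^{-1}(x)\text{ differ by more than the modulus scale}\}$ independently of $D$, and off this bad set the indicators agree up to a set of controlled measure. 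Taking $k$ large makes the $K$-contribution $<\epsilon$ uniformly in $D$, so $d(\Phi_k,\Phi)\le 3\epsilon$ for $k$ large. Letting $\epsilon\to 0$ finishes.

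I expect the subtle point to be making the $K$-part bound genuinely \emph{uniform in $D$}: one cannot simply say "$\cha_D\circ\Phi_k^{-1}\to\cha_D\circ\Phi^{-1}$ in $L^1$" with a rate independent of $D$. The resolution is to never estimate the indicator difference directly but rather the measure of the "displacement set" $S_k:=\{x:|\Phi_k^{-1}(x)-\Phi^{-1}(x)|>\delta\}$, which is independent of $D$, together with the elementary inequality $\mu(\Phi_k(D)\triangle\Phi(D))\le\mu(S_k)+(\text{boundary-layer term for }\Phi^{-1}(D)\text{ of width }\delta)$; the boundary-layer term is controlled uniformly in $D$ only after the Lusin truncation, since on $K$ the map $\Phi^{-1}$ is uniformly continuous. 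This is the one place the argument genuinely uses more than soft convergence, and it is where I would spend the most effort.
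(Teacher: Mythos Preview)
Your first implication ($d\to0\Rightarrow d_{\mathrm{DL}}\to0$) is correct and in fact more direct than the paper's contradiction argument via the Lebesgue density theorem. The second implication, however, has a genuine gap precisely where you flag it: the boundary-layer term cannot be controlled uniformly in $D$, and no Lusin truncation of $\Phi^{-1}$ repairs this, because the layer depends on the fine geometry of $D$ itself, not on the regularity of $\Phi^{-1}$. Indeed the implication, with $d$ taken literally as the \emph{supremum} over all $D\in\cM$, is false. On $\bT^1$ set $\Phi=\mathrm{id}$, $\Phi_k(x)=x+\tfrac{1}{2k}\pmod 1$, and $E_k=\bigcup_{j=0}^{k-1}\bigl[\tfrac{j}{k},\tfrac{j}{k}+\tfrac{1}{2k}\bigr)$; then $d_{\mathrm{DL}}(\Phi_k,\mathrm{id})=\tfrac{1}{2k}\to 0$, yet $\Phi_k(E_k)=E_k^{\complement}$ up to a null set, so $d(\Phi_k,\mathrm{id})\ge\mu(\Phi_k(E_k)\triangle E_k)=1$ for every $k$. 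The same construction works on $\bT^d$ by shifting only the first coordinate.

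The paper shares this issue: its ``Equivalently'' assertion just before the proposition---that $d$-convergence coincides with $\mu(\Phi_j(D)\triangle\Phi(D))\to0$ for every fixed $D$---is exactly what the counterexample above refutes, and the paper's proof of the converse direction establishes only the \emph{pointwise} statement (it reduces to balls and estimates $\mu(\Phi_k(B)\triangle\Phi(B))$ for each fixed ball $B$, which does not control the supremum). If you read the proposition with the pointwise symmetric-difference topology in place of the sup-metric---which is how the paper effectively uses it, including in Theorem~\ref{thm:topo}---both directions become routine: your dyadic-cube argument handles one, and for the other you need only show $\mu(\Phi_k(D)\triangle\Phi(D))\to0$ for each \emph{fixed} $D$, which follows from convergence in measure by approximating $D$ by a finite union of cubes or balls, exactly as the paper does. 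So the fix is to drop the attempt at uniformity in $D$; it is neither achievable nor required.
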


\begin{proof}
First, assume that $d(\Phi_k, \Phi) \to 0$. 
Suppose, for contradiction, that $\Phi_k$ does not converge to $\Phi$ in 
measure.
Then there exist $\delta > 0$, $\eta > 0$, and a subsequence 
(still denoted by $\{\Phi_k\}_{k \in \bN}$) 
such that for all~$k$, one has that 
\[
\mu \left\{ x \in \bT^d \,\big|\, 
|\Phi_k \circ \Phi^{-1}(x) - x| > \delta \right\} = 
\mu \left\{ x \in \bT^d \,\big|\, |\Phi_k(x) - \Phi(x)| > \delta 
\right\} > \eta.
\]
Define 
\[
A_k := \left\{ x \in \bT^d \,\big|\, |\Phi_k \circ \Phi^{-1}(x) - x| > \delta 
\right\},
\]
and 
\[
A : = \bigcap_{m \in \bN} \bigcup_{k \ge m} A_k.
\]
Since $\mu (\bT^d) < \infty$, one has that 
\[
\mu(A) = \lim_{m \to \infty} \mu (\cup_{k \ge m} A_k) \ge \eta > 0.
\]
Passing to a further subsequence, still denoted by $\{\Phi_k\}_{k \in \bN}$, 
we may assume that 
\[
|\Phi_k \circ \Phi^{-1}(x) - x| > \delta \quad \text{for each } x \in A, 
k \in \bN.
\]
By the Lebesgue density theorem, there exists a ball 
$B_\delta(x_0) \subset \bT^d$ with $\delta_0 \in (0, \delta / 4)$ such that 
\[
\mu(A_0) > 
\frac{1}{2} \mu(B_{\delta_0}(x_0)) > 0, \quad A_0 := A \cap B_{\delta_0}(x_0).
\]
For each $k \in \bN$ and each $x, y \in A_0$, 
\[
|\Phi_k \circ \Phi^{-1}(x) - y| \ge 
|\Phi_k \circ \Phi^{-1}(x) - x| - |x - y| \ge 
\delta - 2 \delta_0 > 2 \delta_0 > 0,
\]
which yields that $A_0 \cap \Phi_k \circ \Phi^{-1}(A_0) = \varnothing$.
Therefore, for each $k \in \bN$, one has that 
\[
\mu \left( \Phi_k(A_0) \triangle \Phi(A_0) \right) = 
\mu \left( \Phi_k \circ \Phi^{-1}(A_0) \triangle A_0 \right) = 
2 \mu(A_0) > 0,
\]
contradicting $d(\Phi_k, \Phi) \to 0$. 
This proves convergence in measure.

Conversely, assume that $\Phi_k \to \Phi$ in measure. 
By the definition of the metric $d$, it suffices to prove 
\[
\mu(\Phi_k(B) \triangle \Phi(B)) \to 0 \quad \text{for every ball } 
B \subset \bT^d,
\]
since it is possible to approximate any Lebesgue measurable set in $\bT^d$ 
by a finite disjoint union of balls such that the symmetric difference has 
arbitrarily small measure.
Fix a ball $B = B_r(x_0)$ and let $\epsilon \in (0, \frac{r}2)$.
By convergence in measure, there exists $K \in \bN$ such that for all 
$k \ge K$, 
\[
\mu (D_k) < \varepsilon, \quad D_k := \left\{ x \in \bT^d \,\big|\, 
|\Phi_k \circ \Phi^{-1}(x) - x| > \varepsilon \right\}
\]
If $x \in B_{r - \varepsilon}(x_0) \setminus D_k$, then 
\[
|\Phi_k \circ \Phi^{-1}(x) - x_0| \le 
|\Phi_k \circ \Phi^{-1}(x) - x| + |x - x_0| < \epsilon + (r - \epsilon) = r,
\]
so at this time, $\Phi_k \circ \Phi^{-1}(x) \in B_r(x_0)$.
Consequently, 
\[
\Phi_k \circ \Phi^{-1} (B_{r-\epsilon}(x_0) \setminus D_k) \subset B_r(x_0),
\]
which yields that 
\begin{multline*}
B_r(x_0) \setminus \Phi_k \circ \Phi^{-1} (B_r(x_0)) \\ \subset 
B_r(x_0) \setminus (B_{r-\epsilon}(x_0) \setminus D_k) = 
(B_r(x_0) \setminus B_{r-\epsilon}(x_0)) \cup (B_r(x_0) \cap D_k).
\end{multline*}
Therefore, 
\[
\mu(B_r(x_0) \setminus \Phi_k \circ \Phi^{-1} (B_r(x_0))) \le 
\mu(B_r(x_0) \setminus B_{r-\epsilon}(x_0)) + \mu(D_k) \le 
\alpha_d (r^d - (r- \epsilon)^d) + \epsilon,
\]
where $\alpha_d$ is the volume of the $d$-dimensional unit ball.
It follows that 
\[
\mu(\Phi_k(B_r(x_0)) \triangle \Phi(B_r(x_0)) ) = 2 
\mu(B_r(x_0) \setminus \Phi_k \circ \Phi^{-1} (B_r(x_0))) \to 0
\quad \text{ as } k \to \infty,
\]
as claimed.
This completes the proof.
\end{proof}

\begin{remark}
DiPerna and Lions originally worked on the whole space $\bR^d$ 
\cite{DiPerna1989Ordinary}, 
in which case they equipped $\Aut(\bR^d, \mu)$ with the metric
\[
\tilde{d}_{\mathrm{DL}}(\Phi, \Psi) = \sum_{n \ge 1} \frac{1}{2^n} 
\norm{\min \{ |\Phi - \Psi|, 1 \}}_{L^1(B_n(0))},
\]
so that convergence in $\tilde{d}_{\mathrm{DL}}$ is equivalent to convergence 
in measure on arbitrary balls.
Although this convergence is not the same as convergence in measure on 
$\bR^d$, a  modification of the proof of Proposition~\ref{prop:topo} shows 
that convergence in $\tilde{d}_{\mathrm{DL}}$ is nevertheless equivalent to 
convergence in symmetric difference, in the sense of \eqref{eq:sym_diff}.
\end{remark}

\begin{remark}
Since $\bT^d$ is compact with $\mu(\bT^d) < \infty$, 
Proposition~\ref{prop:topo} implies that, for any sequence of 
measure-preserving maps on $\bT^d$,
convergence in symmetric difference as in \cref{eq:sym_diff}
is equivalent to convergence in measure, and 
hence to convergence in $L^p$ for every $1 \le p < \infty$.
The symmetric-difference topology captures the asymptotic behavior of 
$\rho(t, \cdot)$, whereas the $L^p$ topology reflects properties of the flow 
maps themselves. 
In this sense, Proposition~\ref{prop:topo} provides a crucial link between 
these perspectives and suggests avenues for further investigation.
\end{remark}

\section{Conclusions}\label{sec:concl}

We have shown that the classical mixing criteria, particularly, mixing for all 
nontrivial initial data, fail to persist under arbitrarily small perturbations 
of a divergence-free velocity field in $L^\infty (W^{1,p})$.
To address this lack of robustness, we introduce a Young–measure framework and 
prove that a bounded density fails to mix if and only if each of its level sets 
remains unmixed.
This yields an associated unmixed $\sigma$-algebra,  
whose measurability precisely characterizes the failure of mixing. 
In particular, we establish that 
``one datum mixes'' is equivalent to ``generic data mix'', which in turn is 
equivalent to the non-precompactness of the measure-preserving flow maps 
in the $L^1$ topology.

A natural question arises: whether, in the Baire-generic sense, divergence-free 
velocity fields necessarily fail to generate precompact trajectories.
We therefore propose:

\begin{conjecture}\label{conj}
Fix $p \in (1, \infty)$. For a residual subset of 
\[
\{u \in L^\infty([0,+\infty); W^{1,p}(\bT^d; \bR^d)) \mid \nabla \cdot u = 0\},
\]
the associated flow maps $\{\Phi_t\}_{t \ge 0}$ are not precompact in $L^1$.
\end{conjecture}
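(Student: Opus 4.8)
The plan is to run a Baire category argument in the Banach space
$X := \{u \in L^\infty([0,+\infty); W^{1,p}(\bT^d;\bR^d)) \mid \nabla\cdot u = 0\}$,
which is closed in $L^\infty (W^{1,p})$ (the divergence-free constraint passes to $L^1_{\mathrm{loc}}$-limits) and hence a Baire space. By \Cref{prop:topo} and \Cref{thm:topo}, the orbit $\{\Phi_t\}_{t\ge0}$ fails to be precompact in $L^1$ precisely when $\cN_u \ne \cM$, and by \Cref{thm:cha} it suffices for this that a single set fails to lie in $\cN_u$. I would fix once and for all the macroscopic half-torus $B := \{x\in\bT^d \mid x_1 < \tfrac12\}$ and aim to show that
\[
\cG_B := \left\{u\in X \mid B\notin\cN_u\right\}
= \left\{u\in X \mid \{\cha_B\circ\Phi_t^{u,-1}\}_{t\ge0}\text{ is not precompact in }L^2\right\}
\]
is residual in $X$; by the chain of equivalences above this proves the conjecture.

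The first ingredient is the continuity of the flow in the vector field: if $u_n\to u$ in $X$, then $u_n\to u$ in $L^1((0,T)\times\bT^d)$ for every $T$ while $\{|\nabla u_n(t,\cdot)|^p\}$ stays bounded in $L^1(\bT^d)$ uniformly in $n,t$, so the DiPerna--Lions stability theorem for regular Lagrangian flows yields $\Phi_t^{u_n}\to\Phi_t^{u}$ in $L^1(\bT^d)$, hence in measure, hence $d(\Phi_t^{u_n},\Phi_t^{u})\to0$ by \Cref{prop:topo}. Thus $u\mapsto\Phi_t^{u}$ is continuous from $X$ into $(\Aut(\bT^d,\mu),d)$ for each fixed $t$, and so $u\mapsto\cha_B\circ\Phi_t^{u,-1}$ is continuous from $X$ into $L^2(\bT^d)$. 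Using this, for $n\in\bN$ set
\[
\mathcal{O}_n := \left\{u\in X \mid \exists\, t_1,\dots,t_n\ge0:\ \norm{\cha_B\circ\Phi_{t_i}^{u,-1} - \cha_B\circ\Phi_{t_j}^{u,-1}}_{L^2} > \tfrac12\ \text{for all }i\ne j\right\}.
\]
Each $\mathcal{O}_n$ is open, since a witnessing tuple for $u$ keeps witnessing for all nearby $u'$ by the continuity just established; and if $u\in\mathcal{Q} := \bigcap_{n\ge1}\mathcal{O}_n$, then the orbit $\{\cha_B\circ\Phi_t^{u,-1}\}_{t\ge0}$ has arbitrarily large finite $\tfrac12$-separated subsets, hence a greedily-chosen infinite $\tfrac12$-separated subset, hence is not totally bounded and thus not precompact in $L^2$. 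So $\mathcal{Q}$ is a $G_\delta$ subset of $\cG_B$, and the whole problem reduces to showing that $\mathcal{Q}$ is dense in $X$.

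Density is the heart of the matter. I would show that every $u\in X$ which \emph{strongly mixes} $B$, meaning $\cha_B\circ\Phi_t^{u,-1}\rightharpoonup\mu(B)=\tfrac12$ weakly in $L^2$ as $t\to\infty$, lies in $\mathcal{Q}$, and that such $u$ are dense. The first assertion is soft: $\norm{\cha_B\circ\Phi_t^{u,-1}-\tfrac12}_{L^2}^2 = \mu(B)(1-\mu(B)) = \tfrac14$ for every $t$ by measure preservation, while weak convergence forces $\langle\cha_B\circ\Phi_s^{u,-1}-\tfrac12,\,\cha_B\circ\Phi_t^{u,-1}-\tfrac12\rangle\to0$ as $t\to\infty$ for each fixed $s$; a diagonal extraction produces $t_1<t_2<\cdots$ along which these inner products have absolute value $<\tfrac1{100}$, whence $\norm{\cha_B\circ\Phi_{t_i}^{u,-1}-\cha_B\circ\Phi_{t_j}^{u,-1}}_{L^2}^2 \ge \tfrac14+\tfrac14-\tfrac1{50} > \tfrac12$ for $i\ne j$, giving an infinite $\tfrac12$-separated subset of the orbit, so $u\in\mathcal{O}_n$ for all $n$. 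The second assertion is the genuine analytic input: given $u_0\in X$ and $\epsilon>0$, one must produce a divergence-free $v$ with $\sup_{t\ge0}\norm{v(t,\cdot)}_{W^{1,p}}<\epsilon$ such that $u_0+v$ strongly mixes $B$. Here I would draw on the known constructions of universal/exponential mixers (e.g.\ \cite{elgindiUniversalMixersAll2019,albertiExponentialSelfsimilarMixing2018a}): rescaling such a stirring to have small $W^{1,p}$ norm only slows the mixing (the Iyer--Kiselev--Xu lower bound shows it cannot be sped up, but it does not fail), and one superimposes it on $u_0$ along a suitable, possibly very long, time-dependent schedule, together with a stability estimate guaranteeing that the background $u_0$ does not undo the mixing of $B$.

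I expect this last step to be the main obstacle. Unlike the localized construction in the proof of \Cref{thm:nong}, the perturbation here must \emph{create} asymptotic mixing of a fixed macroscopic set against an arbitrary, possibly adversarial, background flow, so it cannot be supported near a single point; a careful multiscale or time-dependent construction, together with a quantitative argument that $u_0$ does not destroy the mixing, appears necessary and may itself require substantial work. The flow-continuity input, while routine in spirit, also needs care when invoking DiPerna--Lions stability uniformly on the unbounded time interval. Once density is established, $\mathcal{Q}$ is a dense $G_\delta$ inside $\cG_B$, so $\cG_B$, and a fortiori $\{u\in X\mid \cN_u\ne\cM\}$, is residual, and the conjecture follows.
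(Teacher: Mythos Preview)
The paper does \emph{not} prove this statement: it is explicitly formulated as Conjecture~\ref{conj} in the concluding section, and the authors write that ``an affirmative resolution would complete the description of typical mixing behavior in incompressible fluids.'' There is therefore no proof in the paper to compare your attempt against.

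As for your proposal on its own merits: the Baire-category scaffolding is set up correctly. The continuity step $u\mapsto\Phi_t^u$ via DiPerna--Lions stability is legitimate for each fixed $t$, and your sets $\mathcal{O}_n$ are indeed open, so $\mathcal{Q}=\bigcap_n\mathcal{O}_n$ is a $G_\delta$ contained in $\cG_B$. The reduction of the conjecture to the density of $\mathcal{Q}$ is sound.

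The genuine gap is exactly where you locate it: the density claim. You propose to show that every $u_0\in X$ admits an arbitrarily small divergence-free perturbation $v$ such that $u_0+v$ strongly mixes the fixed half-torus $B$. This is not a minor technicality but essentially the full content of the conjecture, and possibly stronger. The known universal mixers of \cite{elgindiUniversalMixersAll2019,albertiExponentialSelfsimilarMixing2018a} are constructed in isolation; superimposing a rescaled copy on an \emph{arbitrary} background $u_0$ gives no control on the flow of $u_0+v$, since $u_0$ may systematically undo whatever filamentation $v$ produces (nothing prevents $u_0$ from being, say, a time-shifted reverse of the mixer on the relevant scales). No stability estimate of the type you allude to is available in the literature, and the paper itself offers no tool in this direction. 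Until this density step is supplied with a genuine construction and estimate, the argument remains a plausible outline rather than a proof, consistent with the authors' decision to leave the statement as a conjecture.
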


In the present work, we have analyzed the genericity of mixing for a given 
incompressible flow in terms of the initial data of the transport equation. 
Conjecture~\ref{conj} addresses the complementary question of the genericity of 
divergence-free velocity fields that do mix passive scalars. 
An affirmative resolution would complete the description of typical mixing 
behavior in incompressible fluids.

\section*{Acknowledgements}
The authors would like to thank 
Dr. Yao Yao in Department of Mathematics, National University of Singapore, and 
Dr. Yizhou Zhou in IGPM, RWTH Aachen University, for insightful discussions.


\end{document}